\titleformat*{\section}{\large\bfseries}
\newtheorem{theorem}{Theorem}[section]
\newtheorem{lemma}[theorem]{Lemma}
\newtheorem{corollary}[theorem]{Corollary}
\newtheorem{proposition}[theorem]{Proposition}
\title{A new characterization of generalized Browder's theorem and a Cline's formula for generalized Drazin-meromorphic inverses  }
\author{\large Anuradha Gupta$^1$ and Ankit Kumar$^2$\\ $^1$ {\small Department of Mathematics, Delhi College of Arts and Commerce,}\\ {\small University of Delhi, Netaji Nagar, New Delhi-110023, India.}\\{\small E-mail:dishna2@yahoo.in}\\{\small $^2$Department of Mathematics, University of Delhi, New Delhi-110007, India.}\\{\small E-mail 1995ankit13@gmail.com}}
\date{}
\begin{document}
\maketitle
\section*{Abstract}
In this paper, we give a new characterization of generalized Browder's theorem by considering equality between the generalized Drazin-meromorphic Weyl spectrum and the generalized Drazin-meromorphic spectrum.  Also, we generalize Cline's formula to the case of generalized Drazin-meromorphic invertibility under the assumption that $A^kB^kA^k=A^{k+1}$ for some positive integer $k$.\\
\textbf{Mathematics Subject Classification:} 47A10, 47A53.\\
\textbf{Keywords:} SVEP, generalized  Drazin-meromorphic  invertible, meromorphic operators, operator equation.
 \section{Introduction and Preliminaries}
 Throughout this paper, let $\mathbb{N}$ and $\mathbb{C}$ denote the set of natural numbers and complex numbers, respectively.  Let $B(X)$ denote the Banach algebra of all bounded linear operators acting on a complex Banach space $X$. For $T \in B(X)$,  we denote the spectrum of $T$, null space of $T$, range of $T$ and adjoint of $T$ by $\sigma(T)$, ker$(T)$, $R(T)$ and $T^*$, respectively.  For a subset $A$ of $\mathbb{C}$  the set of accumulation points of $A$ is denoted by acc$(A)$.  Let $\alpha(T)=$ dim ker($T$) and $\beta(T)= \mbox{codim}\thinspace R(T)$ be the nullity of $T$ and deficiency of $T$, respectively.  An operator $T \in B(X)$ is called a lower semi-Fredholm operator if $\beta(T) < \infty $. An operator $T \in B(X)$ is called an upper semi-Fredholm operator if $\alpha(T) < \infty $ and $R(T)$ is closed .  The  class of all lower semi-Fredholm operators (upper semi-Fredholm operators, respectively) is denoted by $\phi_{+} (X)$ ($\phi_{-}(X)$, respectively). An operator $T$ is called semi-Fredholm if it is upper or lower semi-Fredholm. For a semi-Fredholm operator $T \in B(X)$, the index of $T$ is defined by ind $(T$):= $\alpha(T)-\beta(T)$. The class of all Fredholm operators is defined by $\phi(X):=\phi_{+}(X)\cap \phi_{-}(X)$.  The class of all lower semi-Weyl operators (upper semi-Weyl operators, respectively) is defined by $W_{-} (X)=\{T\in\phi_{-} (X):$ ind $(T) \geq 0\}$  ($W_{+} (X)=\{T\in\phi_{+} (X):$ ind $(T) \leq 0\} $, respectively).  An operator $T \in B(X)$ is called Weyl if $T \in \phi(X)$ and ind $(T)=0$. The \emph{lower semi-Fredholm}, \emph{lower semi-Fredholm}, \emph{Fredholm}, \emph{lower semi-Weyl }, \emph{upper semi-Weyl} and  \emph{Weyl spectra}  are defined by 
 \begin{align*}
 \sigma_{lf}(T)&:=\{\lambda \in \mathbb{C}:\lambda I-T \thinspace \mbox{is not lower semi-Fredholm}\},\\
 \sigma_{uf}(T)&:=\{\lambda \in \mathbb{C}:\lambda I-T \thinspace  \mbox{is not upper semi-Fredholm}\},\\
 \sigma_{f}(T)&:=\{\lambda \in \mathbb{C}:\lambda I-T \thinspace  \mbox{is not Fredholm}\},\\
 \sigma_{lw}(T)&:=\{\lambda \in \mathbb{C}:\lambda I-T \thinspace \mbox{is not lower semi-Weyl}\},\\
 \sigma_{uw}(T)&:=\{\lambda \in \mathbb{C}:\lambda I-T \thinspace  \mbox{is not upper semi-Weyl}\},\\
 \sigma_{w}(T)&:=\{\lambda \in \mathbb{C}:\lambda I-T \thinspace  \mbox{is not Weyl}\}, \thinspace  \mbox{respectively}.
 \end{align*}
  A bounded linear operator $T$ is said to be bounded below if it is injective and $R(T)$ is closed. For $T \in B(X)$ the ascent denoted by  $p(T)$ is the smallest non negative integer $p$ such that ker$T^{p}= \mbox{ker}T^{p+1}$. If no such integer exists we set $p(T)= \infty$.  For $T \in B(X)$ the descent  denoted by $q(T)$ is the smallest non negative integer $q$ such that $R(T^q)=R(T^{q+1})$. If no such integer exists we set $q(T)= \infty$. By \cite[Theorem 1.20]{1} if both $p(T)$ and $q(T)$ are finite then $p(T)=q(T)$. An operator $T \in B(X)$ is called left Drazin invertible  if $p(T) < \infty$ and $R(T^{p+1})$ is closed. An operator $T \in B(X)$ is called right Drazin invertible  if $q(T) < \infty$ and $R(T^{q})$ is closed. Moreover, $T$ is called Drazin invertible if $p(T)=q(T) < \infty$.  An operator $T \in B(X)$ is called upper semi-Browder  if it is an upper semi-Fredholm  and $p(T) < \infty$. An operator $T \in B(X)$ is called lower semi-Browder  if it is an lower semi-Fredholm and $q(T) < \infty$. We say that an operator $T \in B(X)$ is  Browder if it is upper semi-Browder and lower semi-Browder. The \emph{lower semi-Browder}, \emph{upper semi-Browder} and \emph{Browder spectra} are defined by
  \begin{align*}
  \sigma_{lb}(T):& =\{\lambda \in \mathbb{C}: \lambda I -T \thinspace \mbox{is not lower semi-Browder} \},\\
  \sigma_{ub}(T):& =\{\lambda \in \mathbb{C}: \lambda I -T \thinspace \mbox{is not upper semi-Browder} \},\\
  \sigma_b(T):& =\{\lambda \in \mathbb{C}: \lambda I -T \thinspace \mbox{is not Browder} \}, \thinspace \mbox{respectively}.
  \end{align*} Clearly, every Browder operator is Drazin invertible.
 
An operator $T \in B(X)$ is said to possess the single-valued extension property (SVEP) at $\lambda_{0} \in \mathbb{C}$ if for every neighbourhood $V$ of $\lambda_{0}$ the only analytic function $f:V \rightarrow X$ which satisfies the equation $(\lambda I-T)f(\lambda)=0$ is the function $f=0$. If an operator $T$ has SVEP at every $\lambda \in \mathbb{C}$, then $T$ is said to have SVEP. Morever, the set of all points $\lambda \in \mathbb{C}$ such that $T$ does not have SVEP at $\lambda$ is an open set contained in interior of $\sigma(T)$. Therefore, if $T$ has SVEP at each point  of an open punctured disc $\mathbb{D} \setminus \{\lambda_{0}\}$ centered at $\lambda_{0}$, $T$ also has SVEP at $\lambda_{0}$.    
$$p(\lambda I-T) < \infty \Rightarrow  T \thinspace \mbox{has SVEP at} \thinspace \lambda$$
and 
$$q(\lambda I-T) < \infty \Rightarrow  T^{*} \thinspace \mbox{has SVEP at} \thinspace  \lambda.$$
An operator $T \in B(X)$ is called Riesz if $\lambda I-T$ is Browder for all $\lambda \in \mathbb{C} \setminus \{0\}$. An operator $T \in B(X)$ is called meromorphic if $\lambda I-T$ is Drazin invertible for all $\lambda \in \mathbb{C} \setminus \{0\}$. Clearly, every Riesz operator is meromorphic. A subspace $M$ of $X$ is said to be $T$-$invariant$ if $T(M) \subset M$. For a $T$-invariant subspace $M$ of $X$ we define $T_{M}:M \rightarrow M$ by $T_M (x)= T(x),x\in M$. We say $T$ is completely reduced by the pair $(M,N)$ (denoted by $(M,N) \in Red(T)$) if $M$ and $N$ are two closed $T$-invariant subspaces of $X$ such that $X=M \oplus N$.

An operator $T \in B(X)$ is called semi-regular if $R(T)$ is closed and ker$(T) \subset R(T^n)$ for every $n \in \mathbb{N}$. An operator $T \in B(X)$ is called nilpotent if $T^n =0$ for some $n \in \mathbb{N}$ and called quasi-nilpotent if $\vert \vert T^n \vert \vert^{\frac{1}{n}} \rightarrow 0$, i.e $\lambda I-T$ is invertible for all $\lambda \in \mathbb{C} \setminus \{0\}$.

For $T \in B(X)$  and a non negative integer $n$, define $T_{[n]}$ to be the restriction of $T$ to $T^n(X)$. If for some non negative integer $n$ the range space $T^n(X)$ is closed and $T_{[n]}$ is Fredholm (a lower semi B-Fredholm, an upper semi B-Fredholm, a lower semi B-Browder, an upper semi B-Browder, B-Browder, respectively) then $T$ is said to be B-Fredholm (a lower semi B-Fredholm, an upper semi B-Fredholm, a lower semi B-Browder, an upper semi B-Browder, B-Browder, respectively). For a semi B-Fredholm operator $T$, (see \cite{23}), the index of $T$ is defined as index of $T_{[n]}$. The \emph{lower semi B-Fredholm}, \emph{upper semi B-Fredholm} and \emph{B-Fredholm}, \emph{lower semi B-Browder}, \emph{upper semi B-Browder} and \emph{B-Browder spectra}  are defined by  
\begin{align*}
\sigma_{lsbf}(T)&:=\{\lambda \in \mathbb{C}: \lambda I-T \thinspace \mbox{is not lower semi B-Fredholm}\},\\
\sigma_{usbf}(T)&:=\{\lambda \in \mathbb{C}: \lambda I-T \thinspace \mbox{is not upper semi B-Fredholm}\},\\
\sigma_{bf}(T)&:=\{\lambda \in \mathbb{C}: \lambda I-T \thinspace \mbox{is not B-Fredholm}\}, \\
\sigma_{lsbb}(T)&:=\{\lambda \in \mathbb{C}: \lambda I-T \thinspace \mbox{is not lower semi B-Browder}\},\\
\sigma_{usbb}(T)&:=\{\lambda \in \mathbb{C}: \lambda I-T \thinspace \mbox{is not upper semi B-Browder}\},\\
\sigma_{bb}(T)&:=\{\lambda \in \mathbb{C}: \lambda I-T \thinspace \mbox{is not B-Browder}\}, \thinspace \mbox{respectively.}
\end{align*}
By \cite[Theorem 3.47]{1} an operator $T \in B(X)$ is upper semi B-Browder (lower semi B-Browder, B-Browder, respectively) if and only if $T$ is left Drazin invertible (right Drazin invertible, Drazin invertible, respectively).

An operator $T \in B(X)$ is called   a lower semi B-Weyl (an upper semi B-Weyl, respectively) if it is an lower semi B-Fredholm (an upper semi B-Fredholm, respectively)  having ind $(T)\leq 0$ (ind $(T) \geq 0$, respectively). An operator $T \in B(X)$ is called B-Weyl if it is B-Fredholm  and ind $(T)=0$. The \emph{lower semi B-Weyl}, \emph{upper semi B-Weyl} and  \emph{B-Weyl spectra} are defined by  
\begin{align*}
\sigma_{lsbw}(T)&:=\{\lambda \in \mathbb{C}: \lambda I-T \thinspace \mbox{is not lower semi B-Weyl}\},\\
\sigma_{usbw}(T)&:=\{\lambda \in \mathbb{C}: \lambda I-T \thinspace \mbox{is not upper semi B-Weyl}\},\\
\sigma_{bw}(T)&:=\{\lambda \in \mathbb{C}: \lambda I-T \thinspace \mbox{is not B-Weyl}\}, \thinspace \mbox{respectively.}
\end{align*}
It is known that (see \cite[Theorem 2.7]{23}) $T \in B(X)$ is B-Fredholm (B-Weyl, respectively) if there exists $(M,N) \in Red(T)$ such that $T_M$ is Fredholm (Weyl, respectively) and $T_N$ is nilpotent. Recently, (see \cite{29,31}) have generalized the class of B-Fredholm and B-Weyl operators and introduced the concept of pseudo B-Fredholm and pseudo B-Weyl operators. An operator $T \in B(X)$ is said to be pseudo B-Fredholm (pseudo B-Weyl, respectively) if there exists $(M,N) \in Red(T)$ such that $T_M$ is Fredholm (Weyl, respectively) and $T_N$ is quasi-nilpotent. The \emph{pseudo B-Fredholm} and \emph{pseudo B-Weyl spectra} are defined by
\begin{align*}
\sigma_{pBf}(T)&:=\{\lambda \in \mathbb{C}: \lambda I-T \thinspace \mbox{is not pseudo B-Fredholm}\},\\
\sigma_{pBw}(T)&:=\{\lambda \in \mathbb{C}: \lambda I-T \thinspace \mbox{is not pseudo B-Weyl}\}, \thinspace \mbox{respectively.}
\end{align*}
An operator $T$ is said to admit a \textit{generalized kato decomposition} ($GKD$), if there exists a pair $(M,N) \in Red(T)$ such that $T_M$ is semi-regular and $T_N$ is quasi-nilpotent. In the above definition if we assume $T_N$ to be nilpotent, then $T$ is said to be of Kato Type. (See \cite{10}) An operator is said to admit a \textit{Kato-Riesz decomposition} ($GKRD$), if there exists a pair $(M,N) \in Red(T)$ such that $T_M$ is semi-regular and $T_N$ is Riesz. 

Recently, \v{Z}ivkovi\'{c}-Zlatanovi\'{c} and  Duggal \cite{30} introduced the notion of generalized Kato-meromorphic decomposition. An operator $T \in B(X)$ is said to admit a \emph{generalized Kato-meromorphic decomposition} ($GKMD$), if there exists a pair $(M,N) \in Red(T)$ such that $T_M$ is semi-regular and $T_N$ is meromorphic. For $T \in B(X)$ the \textit{generalized Kato-meromorphic spectrum} is defined by $$\sigma_{gKM}(T):= \{\lambda \in \mathbb{C}:\lambda I-T \thinspace  \mbox{does not admit a GKMD}\}.$$ 
Recall that an operator $T \in B(X)$ is said to be Drazin invertible if there exists $S \in B(X)$ such that $TS=ST$, $STS=S$ and $TST-T$ is nilpotent. This definition is equivalent to the fact that there exist of  a pair $(M,N) \in Red(T)$ such that $T_M$ is invertible and $T_N$ is nilpotent.  Koliha \cite{8} generalized this concept by replacing the third condition with $TST-T$ is quasi-nilpotent. An operator is said to be generalized Drazin invertible if there exist  a pair $(M,N) \in Red(T)$ such that $T_M$ is invertible and $T_N$ is quasi-nilpotent. The \emph{generalized Drazin spectrum} is defined by
$$\sigma_{gD}(T):= \{ \lambda \in \mathbb{C}: \lambda I-T \thinspace  \mbox{is not generalized Drazin invertible}\}.$$
 Recently, \v{Z}ivkovi\'{c}-Zlatanovi\'{c}\ and Cvetkovi\'{c} \cite{10}  introduced the concept of  generalized Drazin-Riesz invertible by replacing the third condition with $TST-T$ is Riesz. They proved that the an operator $T \in B(X)$ is generalized Drazin-Riesz invertible if and only if there exists  a pair $(M,N) \in Red(T)$ such that $T_M$ is invertible and $T_N$ is Riesz. An operator $T \in B(X)$ is called generalized Drazin-Riesz bounded below (surjective, respectively) if there exists a pair $(M,N) \in Red(T)$ such that $T_M$ is bounded below (surjective, respectively) and $T_N$ is Riesz. The \emph{generalized Drazin-Riesz bounded below}, \emph{generalized Drazin-Riesz surjective} and \emph{generalized Drazin-Riesz invertible spectra} are defined by
 \begin{align*}
\sigma_{gDR\mathcal{J}}(T)&:=\{\lambda \in \mathbb{C}: \lambda I-T \thinspace \mbox{is not generalized Drazin-Riesz bounded below}\},\\
\sigma_{gDR\mathcal{Q}}(T)&:=\{\lambda \in \mathbb{C}: \lambda I-T \thinspace \mbox{is not generalized Drazin-Riesz surjective}\},\\
\sigma_{gDR}(T)&:=\{\lambda \in \mathbb{C}: \lambda I-T \thinspace \mbox{is not generalized Drazin-Riesz invertible}\}, \thinspace \mbox{respectively.}
\end{align*}
Also, they introduced the notion  of operators which are direct sum of a Riesz  and a Fredholm (lower (upper) semi-Fredholm, lower (upper) semi-Weyl, Weyl). An operator is called generalized Drazin-Riesz Fredholm (generalized Drazin-Riesz lower (upper) semi-Fredholm, generalized Drazin-Riesz lower (upper) semi-Weyl, generalized Drazin-Riesz Weyl, respectively) if there exists $(M,N) \in Red(T)$ such that $T_M$ is  Fredholm (lower (upper) semi-Fredholm, lower (upper) semi-Weyl, Weyl, respectively) and $T_N$ is Riesz. The \emph{The generalized Drazin-Riesz lower (upper) semi-Fredholm}, \emph{generalized Drazin-Riesz Fredholm}, \emph{generalized Drazin-Riesz  upper(lower) semi-Weyl} and \emph{generalized Drazin-Riesz Weyl spectra}, are defined by
\begin{align*}
\sigma_{gDR\phi_{-}}(T)&:=\{\lambda \in \mathbb{C}: \lambda I-T \thinspace \mbox{is not generalized Drazin-Riesz lower semi-Fredholm}\},\\
\sigma_{gDR\phi_{+}}(T)&:=\{\lambda \in \mathbb{C}: \lambda I-T \thinspace \mbox{is not generalized Drazin-Riesz upper semi-Fredholm}\},\\
\sigma_{gDR\phi}(T)&:=\{\lambda \in \mathbb{C}: \lambda I-T \thinspace \mbox{is not generalized Drazin-Riesz Fredholm}\},\\
\sigma_{gDRW_{-}}(T)&:=\{\lambda \in \mathbb{C}: \lambda I-T \thinspace \mbox{is not generalized Drazin-Riesz lower semi-Weyl}\},\\
\sigma_{gDRW_{+}}(T)&:=\{\lambda \in \mathbb{C}: \lambda I-T \thinspace \mbox{is not generalized Drazin-Riesz upper semi-Weyl}\},\\
\sigma_{gDRW}(T)&:=\{\lambda \in \mathbb{C}: \lambda I-T \thinspace \mbox{is not generalized Drazin-Riesz Weyl}\},
 \thinspace \mbox{respectively.}
\end{align*}
 Also, \v{Z}ivkovi\'{c}-Zlatanovi\'{c}   and Duggal   \cite{30} introduced the notion of  generalized Drazin-meromorphic invertible by replacing the third condition with $TST-T$ is meromorphic. They proved that the an operator $T \in B(X)$ is generalized Drazin-meromorphic invertible if and only if there exists  a pair $(M,N) \in Red(T)$ such that $T_M$ is invertible and $T_N$ is meromorphic. An operator $T \in B(X)$ is said to be  generalized Drazin-meromorphic  bounded below  (surjective, respectively) if there exists a pair $(M,N) \in Red(T)$ such that $T_M$ is bounded below  (surjective, respectively) and $T_N$ is meromorphic. The \emph{generalized Drazin-meromorphic  bounded below}, \emph{generalized Drazin-meromorphic  surjective} and \emph{generalized Drazin-meromorphic invertible spectra} are defined by
 \begin{align*}
\sigma_{gDM\mathcal{J}}(T)&:=\{\lambda \in \mathbb{C}: \lambda I-T \thinspace \mbox{is not generalized Drazin-meromorphic bounded below}\}\\
\sigma_{gDM\mathcal{Q}}(T)&:=\{\lambda \in \mathbb{C}: \lambda I-T \thinspace \mbox{is not generalized Drazin-meromorphic surjective}\},\\
\sigma_{gDM}(T)&:=\{\lambda \in \mathbb{C}: \lambda I-T \thinspace \mbox{is not generalized Drazin-meromorphic invertible}\}, \thinspace \mbox{respectively.}
\end{align*}
Also, they introduced the notion  of operators which are direct sum of a meromorphic  and Fredholm (lower (upper) semi-Fredholm, lower (upper) semi-Weyl, Weyl). An operator is called generalized Drazin-meromorphic Fredholm (generalized Drazin-meromorphic lower (upper) semi-Fredholm,  generalized Drazin-meromorphic lower (upper) semi-Weyl, generalized Drazin-meromorphic Weyl) if there exists $(M,N) \in Red(T)$ such that $T_M$ is  Fredholm (lower (upper) semi-Fredholm, lower (upper) semi-Weyl, Weyl) and $T_N$ is Riesz. The \emph{generalized Drazin-meromorphic lower (upper) semi-Fredholm}, \emph{generalized Drazin-meromorphic Fredholm}, \emph{generalized Drazin-meromorphic  lower (upper) semi-Weyl} and \emph{generalized Drazin-meromorphic Weyl spectra} are  defined by
\begin{align*}
\sigma_{gDM\phi_{-}}(T)&:=\{\lambda \in \mathbb{C}: \lambda I-T \thinspace \mbox{is not generalized Drazin-meromorphic lower semi-Fredholm}\},\\
\sigma_{gDM\phi_{+}}(T)&:=\{\lambda \in \mathbb{C}: \lambda I-T \thinspace \mbox{is not generalized Drazin-meromorphic upper semi-Fredholm}\},\\
\sigma_{gDM\phi}(T)&:=\{\lambda \in \mathbb{C}: \lambda I-T \thinspace \mbox{is not generalized Drazin-meromorphic Fredholm}\},\\
\sigma_{gDMW_{-}}(T)&:=\{\lambda \in \mathbb{C}: 
\lambda I-T \thinspace \mbox{is not generalized Drazin-meromorphic lower semi-Weyl}\},\\
\sigma_{gDMW_{+}}(T)&:=\{\lambda \in \mathbb{C}: \lambda I-T \thinspace \mbox{is not generalized Drazin-meromorphic upper semi-Weyl}\},\\
\sigma_{gDMW}(T)&:=\{\lambda \in \mathbb{C}: \lambda I-T \thinspace \mbox{is not generalized Drazin-meromorphic Weyl}\},
 \thinspace \mbox{respectively.}
\end{align*}
From \cite{10,30} we have
\begin{align*}
\sigma_{gD*\phi}(T)&=\sigma_{gD*\phi_{+}}(T) \cup \sigma_{gD*\phi_{-}}(T),\\
\sigma_{gK*}(T) & \subset \sigma_{gD*\phi_{+}}(T) \subset \sigma_{gD*W_{+}} (T) \subset \sigma_{gD*\mathcal{J}}(T),\\
\sigma_{gK*}(T) & \subset \sigma_{gD*\phi_{-}}(T) \subset \sigma_{gD*W_{-}}(T)  \subset \sigma_{gD*\mathcal{Q}}(T),\\
\sigma_{gK*}(T) & \subset \sigma_{gD*\phi}(T) \subset \sigma_{gD*W} \subset \sigma_{gD*}(T),
\end{align*}
where $*$ stands for Riesz or meromorphic operators.

Recall that an operator $T$ satisfies Browder's  theorem if $\sigma_b(T)=\sigma_w(T)$ and generalized Browder's theorem if $\sigma_{bb}(T)=\sigma_{bw}(T)$. Amouch et al. \cite{25} and Karmouni and Tajmouati \cite{27} gave a new characterization of Browder's theorem using spectra arised from Fredholm theory and Drazin invertibilty. Motivated by them, we give a new  characterization of operators satisfying generalized Browder's theorem. We prove that an operator $T$ satisfies generalized Browder's theorem if and only if $\sigma_{gDMW}(T)=\sigma_{gDM}(T)$. In the last section, we generalize the Cline's formula for the case of generalized Drazin-meromorphic invertibility under the assumption that $A^kB^kA^k=A^{k+1}$ for some positive integer $k$.

 \section{Main Results}
 The following result will be used in the sequel:
 \begin{theorem}\label{pretheorem1}
 \emph{\cite[Theorem 2.1]{30}} Let $T \in B(X)$, then $T$ is generalized Drazin-meromorphic upper semi-Weyl (lower semi-Weyl, upper semi-Fredholm, lower semi-Fredholm, Weyl, respectively) if and only if $T$ admits a $GKMD$ and $0 \notin \emph{acc}\sigma_{usbw}(T) \thinspace (\emph{acc}\sigma_{lsbw}(T),\emph{acc}\sigma_{usbf}(T), \emph{acc}\sigma_{lsbf}(T), \emph{acc}\sigma_{bw}(T)$, respectively).
 \end{theorem}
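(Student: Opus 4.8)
The plan is to prove all five equivalences by a single uniform template, letting $\mathcal{P}$ be a placeholder for the property ``upper semi-Weyl'', ``lower semi-Weyl'', ``upper semi-Fredholm'', ``lower semi-Fredholm'' or ``Weyl'', and letting $\sigma_{\ast}$ denote the associated B-version spectrum ($\sigma_{usbw}$, $\sigma_{lsbw}$, $\sigma_{usbf}$, $\sigma_{lsbf}$, $\sigma_{bw}$, respectively). Two structural facts drive everything. First, if $S$ is meromorphic then $\lambda I-S$ is Drazin invertible for every $\lambda\neq 0$; since Drazin invertible operators are B-Browder (hence B-Weyl of index $0$) and for large $n$ the restriction $(\lambda I-S)_{[n]}$ is invertible, the relevant B-spectrum of $S$ is contained in $\{0\}$. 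Second, if $S$ is semi-regular then the semi-regular resolvent set is open and contains $0$, the maps $\lambda\mapsto\alpha(\lambda I-S)$ and $\lambda\mapsto\beta(\lambda I-S)$ (hence the index) are constant on the component of $0$, and a semi-regular operator that is B-upper (B-lower) semi-Fredholm is already upper (lower) semi-Fredholm, because $\ker S\subseteq R(S^n)$ forces $\alpha(S)=\alpha(S_{[n]})$ and dually for $\beta$.

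For the forward implication, suppose there is $(M,N)\in Red(T)$ with $T_M$ of type $\mathcal{P}$ and $T_N$ meromorphic. Since every semi-Fredholm operator admits a Kato decomposition, $T_M$ splits as $M=M_1\oplus M_2$ with $(T_M)_{M_1}$ semi-regular and $(T_M)_{M_2}$ nilpotent; rewriting $X=M_1\oplus(M_2\oplus N)$ and using that the direct sum of a nilpotent and a meromorphic operator is meromorphic, I obtain a GKMD for $T$. For the accumulation-point condition I would use that the $\mathcal{P}$-spectrum is closed, so $\lambda I-T_M$ remains of type $\mathcal{P}$ for all small $\lambda$, while $\lambda I-T_N$ is Drazin invertible for $\lambda\neq 0$; adding the index $0$ of the Drazin summand shows $\lambda I-T$ is B-$\mathcal{P}$ on a punctured disc, that is $0\notin\mathrm{acc}\,\sigma_{\ast}(T)$.

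For the reverse implication, assume $T$ admits a GKMD with witnessing pair $(M,N)$, $T_M$ semi-regular and $T_N$ meromorphic, together with $0\notin\mathrm{acc}\,\sigma_{\ast}(T)$. On a punctured disc $\lambda I-T$ is B-$\mathcal{P}$; since $(\lambda I-T)_{[n]}=(\lambda I-T_M)_{[n]}\oplus(\lambda I-T_N)_{[n]}$ and the second summand is invertible for large $n$, the operator $\lambda I-T_M$ is B-$\mathcal{P}$ there, with the same index. Shrinking the disc so that $\lambda I-T_M$ is also semi-regular, the bridging fact upgrades ``B-$\mathcal{P}$'' to ``$\mathcal{P}$'' for $\lambda\neq 0$, and the local constancy of $\alpha$, $\beta$ and the index across $0$ then transfers type $\mathcal{P}$ to $T_M$ itself. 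Thus the original pair $(M,N)$ already witnesses that $T$ is generalized Drazin-meromorphic of type $\mathcal{P}$.

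The main obstacle is the bridging step together with the constancy statements: one must check carefully that semi-regularity removes the jump of $\alpha$ and $\beta$ at $0$, so that the punctured-disc information genuinely reaches $\lambda=0$, and that a semi-regular B-(semi-)Fredholm operator is honestly (semi-)Fredholm. The index accounting in the Weyl cases, and the verification that a B-Fredholm operator with an invertible (Drazin) complement retains B-Fredholmness with the expected index, are routine but must be tracked separately in each of the five cases.
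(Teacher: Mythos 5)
The paper does not prove this statement at all: it is imported verbatim as \cite[Theorem 2.1]{30}, so there is no internal proof to compare against. Your sketch correctly reconstructs the standard argument from that source (and its Riesz predecessor \cite{10}): Kato decomposition of the semi-Fredholm summand to produce the $GKMD$, openness of the relevant operator classes plus Drazin invertibility of $\lambda I-T_N$ for $\lambda\neq 0$ to get $0\notin\mathrm{acc}\,\sigma_{*}(T)$, and in the converse the constancy of $\alpha(\lambda I-T_M)$, $\beta(\lambda I-T_M)$ and the index on the semi-regular resolvent component together with the fact that a semi-regular semi B-Fredholm operator is semi-Fredholm. The only step I would not call ``dual'': for the $\beta$ half of the bridging fact you need the isomorphism $X/R(S)\cong R(S^n)/R(S^{n+1})$ induced by $S^n$, which uses $\ker S^n\subseteq R(S)$ for semi-regular $S$; this is true and classical (it is in \cite{1}), but it is a genuine small argument rather than a formal dualization.
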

\begin{proposition}\label{theorem1}
Let $T \in B(X)$, then $\sigma_{gDM\mathcal{J}}(T)=\sigma_{gDMW_{+}}(T)$ if and only if $T$ has SVEP at every $\lambda \notin \sigma_{gDMW_{+}}(T)$.
\end{proposition}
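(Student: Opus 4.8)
The plan is to exploit the always-valid inclusion $\sigma_{gDMW_{+}}(T)\subseteq\sigma_{gDM\mathcal{J}}(T)$ recorded in the displayed chain (take $*$ to be \emph{meromorphic}). Consequently the asserted equality is equivalent to the reverse inclusion $\sigma_{gDM\mathcal{J}}(T)\subseteq\sigma_{gDMW_{+}}(T)$, that is, to the statement that whenever $\lambda I-T$ is generalized Drazin-meromorphic upper semi-Weyl it is in fact generalized Drazin-meromorphic bounded below. I would prove the two implications separately, using throughout that $Red(\lambda I-T)=Red(T)$, that SVEP is inherited by restrictions to closed invariant subspaces and reconstituted from the two summands of a pair in $Red(T)$ (see \cite{1}), and that $T_{M}$ has SVEP at $\lambda$ precisely when the restricted operator $(\lambda I-T)_{M}$ has SVEP at $0$.

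For the implication ($\Leftarrow$), assume $T$ has SVEP at every $\lambda\notin\sigma_{gDMW_{+}}(T)$ and fix such a $\lambda$. By Theorem~\ref{pretheorem1} the operator $\lambda I-T$ admits a $GKMD$, so there is a pair $(M,N)\in Red(T)$ with $(\lambda I-T)_{M}$ semi-regular and $(\lambda I-T)_{N}$ meromorphic. Since $T$ has SVEP at $\lambda$, so does its restriction $T_{M}$, which is to say the semi-regular operator $(\lambda I-T)_{M}$ has SVEP at $0$. The key input is the classical fact (see \cite{1}) that a semi-regular operator with SVEP at $0$ is injective; being semi-regular it also has closed range, hence $(\lambda I-T)_{M}$ is bounded below. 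Together with $(\lambda I-T)_{N}$ meromorphic and $(M,N)\in Red(T)$, this exhibits $\lambda I-T$ as generalized Drazin-meromorphic bounded below, so $\lambda\notin\sigma_{gDM\mathcal{J}}(T)$ and the reverse inclusion follows.

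For ($\Rightarrow$), assume $\sigma_{gDM\mathcal{J}}(T)=\sigma_{gDMW_{+}}(T)$ and fix $\lambda$ outside this common set. Then $\lambda I-T$ is generalized Drazin-meromorphic bounded below, so there is $(M,N)\in Red(T)$ with $(\lambda I-T)_{M}$ bounded below and $(\lambda I-T)_{N}$ meromorphic. On the first summand $p\big((\lambda I-T)_{M}\big)=0$, whence $T_{M}$ has SVEP at $\lambda$ by the ascent criterion $p(\lambda I-T)<\infty\Rightarrow$ SVEP at $\lambda$. On the second summand, write $S=(\lambda I-T)_{N}$; for every $\mu\neq 0$ the operator $\mu I-S$ is Drazin invertible, so $p(\mu I-S)<\infty$ and $S$ has SVEP at every nonzero point. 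As the set of points at which SVEP fails is open, SVEP on the punctured disc forces SVEP of $S$ at $0$ as well, that is, $T_{N}$ has SVEP at $\lambda$. Reassembling the two summands gives SVEP of $T$ at $\lambda$, as required.

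The substantive step is the forward direction, where everything hinges on converting the semi-regular part supplied by Theorem~\ref{pretheorem1} into a bounded below operator via the semi-regular/SVEP lemma; the remaining work is the routine bookkeeping of the spectral shift $\lambda\mapsto 0$ and of how SVEP distributes over a pair in $Red(T)$. I would double-check that the $GKMD$ furnished by Theorem~\ref{pretheorem1} may be taken with respect to a reducing pair of the kind used to test bounded belowness, which is automatic since $Red(\lambda I-T)=Red(T)$; notably, beyond the mere existence of the decomposition no $\mathrm{acc}$-spectrum hypothesis from Theorem~\ref{pretheorem1} is needed in the argument.
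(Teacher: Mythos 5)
Your proof is correct and follows essentially the same route as the paper: the substantive direction converts the $GKMD$ from Theorem~\ref{pretheorem1} into a bounded-below decomposition via the fact that a semi-regular operator with SVEP at $0$ is bounded below, exactly as in the paper's argument. The only difference is cosmetic: for the forward implication the paper simply cites \cite[Theorem 2.5]{30} to get SVEP from generalized Drazin-meromorphic bounded belowness, whereas you reprove that fact directly from the decomposition (ascent zero on $M$, SVEP of the meromorphic part on $N$ via the punctured-disc argument), which is a valid, self-contained substitute.
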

\begin{proof}
Suppose that $\sigma_{gDM\mathcal{J}}(T)=\sigma_{gDMW_{+}}(T)$. Let $\lambda \notin\sigma_{gDMW_{+}}(T)$, then $\lambda I-T$ generalized Drazin-meromorphic bounded below. Therefore, by  \cite[Theorem 2.5]{30} $T$ has SVEP at $\lambda$. Conversely, suppose that $T$ has SVEP at every $\lambda \notin \sigma_{gDMW_{+}}(T)$. It suffices to show that $\sigma_{gDM\mathcal{J}}(T) \subset \sigma_{gDMW_{+}}(T)$. Let $\lambda \notin \sigma_{gDMW_{+}}(T)$ which implies that $\lambda I-T$ is generalized Drazin-meromorphic upper semi-Weyl. Therefore, by Theorem \ref{pretheorem1} $\lambda I-T$ admits a $GKMD$. Thus, there exists $(M,N) \in Red(\lambda I-T)$ such that $(\lambda I-T)_M$ is semi-regular and $(\lambda I-T)_N$ is meromorphic. Since $T$ has SVEP at every $\lambda \notin  \sigma_{gDMW_{+}}(T)$, $(\lambda I-T)$ has SVEP at $0$. As SVEP at a point is inherited by the restrictions on closed invariant subspaces,  $(\lambda I-T)_M$ has SVEP at $0$. Therefore, by   \cite[Theorem 2.91]{1} $(\lambda I-T)_M$ is bounded below. Thus, by \cite[Theorem 2.6]{30} we have $\lambda I-T$ is generalized Drazin-meromorphic bounded below. Hence, $\lambda \notin \sigma_{gDM\mathcal{J}}(T)$. 
\end{proof}
\begin{proposition}\label{theorem2}
Let $T \in B(X),$ then $\sigma_{gDM \mathcal{Q}}(T)=\sigma_{gDMW_{-}}(T)$ if and only if $T^{*}$ has SVEP at every $\lambda \notin \sigma_{gDMW_{-}}(T)$.
\end{proposition}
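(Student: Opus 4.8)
The plan is to dualize the proof of Proposition \ref{theorem1} essentially line for line, replacing \emph{bounded below} by \emph{surjective}, \emph{upper semi-Weyl} by \emph{lower semi-Weyl}, and SVEP of $T$ by SVEP of $T^{*}$, while invoking the adjoint/surjective analogues of the three results used there: \cite[Theorem 2.5]{30}, \cite[Theorem 2.91]{1}, and \cite[Theorem 2.6]{30}.

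For the forward implication I would take $\lambda \notin \sigma_{gDMW_{-}}(T)$. Under the hypothesis $\sigma_{gDM\mathcal{Q}}(T)=\sigma_{gDMW_{-}}(T)$ this forces $\lambda I-T$ to be generalized Drazin-meromorphic surjective, and the surjective counterpart of \cite[Theorem 2.5]{30} then yields that $T^{*}$ has SVEP at $\lambda$. This direction is immediate. For the converse, assume $T^{*}$ has SVEP at every $\lambda \notin \sigma_{gDMW_{-}}(T)$; it suffices to prove the inclusion $\sigma_{gDM\mathcal{Q}}(T) \subset \sigma_{gDMW_{-}}(T)$. Fixing $\lambda \notin \sigma_{gDMW_{-}}(T)$, the operator $\lambda I-T$ is generalized Drazin-meromorphic lower semi-Weyl, so by Theorem \ref{pretheorem1} it admits a GKMD: there is $(M,N) \in Red(\lambda I-T)$ with $(\lambda I-T)_{M}$ semi-regular and $(\lambda I-T)_{N}$ meromorphic. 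The hypothesis gives SVEP of $T^{*}$ at $\lambda$, equivalently SVEP of $(\lambda I-T)^{*}$ at $0$. I would then push this SVEP down to $((\lambda I-T)_{M})^{*}$ and apply \cite[Theorem 2.91]{1} in its surjective form (a semi-regular operator whose adjoint has SVEP at $0$ is surjective) to conclude that $(\lambda I-T)_{M}$ is surjective; gluing with the meromorphic part via the surjective version of \cite[Theorem 2.6]{30} then shows $\lambda I-T$ is generalized Drazin-meromorphic surjective, i.e. $\lambda \notin \sigma_{gDM\mathcal{Q}}(T)$.

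The one step that is \emph{not} a mechanical transcription, and hence the main obstacle, is transferring SVEP from $(\lambda I-T)^{*}$ at $0$ to $((\lambda I-T)_{M})^{*}$ at $0$. In Proposition \ref{theorem1} this was handled by the fact that SVEP at a point is inherited by restrictions to closed invariant subspaces, applied directly to $(\lambda I-T)_{M}$. In the dual setting I cannot restrict $(\lambda I-T)^{*}$ to $M^{*}$ directly; instead I would use that $(M,N) \in Red(\lambda I-T)$ induces a reduction of $(\lambda I-T)^{*}$ by the annihilator pair $(N^{\perp},M^{\perp})$, together with the canonical identification $(\lambda I-T)^{*}|_{N^{\perp}} \cong ((\lambda I-T)_{M})^{*}$ coming from $N^{\perp}\cong M^{*}$. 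SVEP of $(\lambda I-T)^{*}$ at $0$ then restricts to the invariant subspace $N^{\perp}$ and, under this identification, gives SVEP of $((\lambda I-T)_{M})^{*}$ at $0$, after which the surjectivity criterion applies. Once this bookkeeping with annihilators is in place, the remainder of the argument follows the pattern of Proposition \ref{theorem1} with no further difficulty.
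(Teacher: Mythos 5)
Your proof is correct, but the converse direction takes a genuinely different route from the paper's. The paper does \emph{not} dualize Proposition \ref{theorem1} locally; instead it argues globally through spectra: from SVEP of $T^{*}$ at every $\lambda\notin\sigma_{gDMW_{-}}(T)\subset\sigma_{lw}(T)=\sigma_{uw}(T^{*})$ it deduces, via \cite[Theorem 5.27]{1}, that $\sigma_{lw}(T)=\sigma_{lb}(T)$, hence $\sigma_{lsbw}(T)=\sigma_{lsbb}(T)$ by \cite[Theorem 5.38]{1}; combining this with the conclusion of Theorem \ref{pretheorem1} that $\lambda\notin\mathrm{acc}\,\sigma_{lsbw}(T)$ gives $\lambda\notin\mathrm{acc}\,\sigma_{lsbb}(T)$, and then \cite[Theorem 2.6]{30} (GKMD plus $\lambda\notin\mathrm{acc}\,\sigma_{lsbb}(T)$) yields generalized Drazin-meromorphic surjectivity. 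You instead work locally on the decomposition: you pass to the adjoint via the reduction $X^{*}=N^{\perp}\oplus M^{\perp}$, identify $(\lambda I-T)^{*}|_{N^{\perp}}$ with $((\lambda I-T)_{M})^{*}$, restrict SVEP to $N^{\perp}$, and invoke the standard fact that a semi-regular operator whose adjoint has SVEP at $0$ is surjective. This is exactly the step the paper's switch of strategy avoids (the restriction trick used for $T$ in Proposition \ref{theorem1} does not apply verbatim to $T^{*}$), and your annihilator bookkeeping handles it correctly: $N^{\perp}$ and $M^{\perp}$ are closed, $(\lambda I-T)^{*}$-invariant, and the identification $N^{\perp}\cong M^{*}$ intertwines the operators. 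Your version is more self-contained and symmetric with Proposition \ref{theorem1}, at the cost of the duality argument; the paper's version leans on the already-available a-Browder/generalized a-Browder machinery and incidentally establishes the global equality $\sigma_{lsbw}(T)=\sigma_{lsbb}(T)$ along the way. Both are valid proofs of the statement.
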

\begin{proof}
Suppose that $\sigma_{gDM\mathcal{Q}}(T)=\sigma_{gDMW_{-}}(T)$. Let $\lambda \notin \sigma_{gDMW_{-}}(T)$, then $\lambda I-T$ generalized Drazin-meromorphic surjective. Therefore, by \cite[Theorem 2.6]{30} $T^{*}$ has SVEP at $\lambda$. Conversely, suppose that $T^{*}$ has SVEP at every $\lambda \notin \sigma_{gDMW_{-}}(T)$. It suffices to show that $\sigma_{gDM\mathcal{Q}}(T) \subset \sigma_{gDMW_{-}}(T)$.  Let $\lambda \notin \sigma_{gDMW_{-}}(T)$ which implies that $\lambda I-T$ is generalized Drazin-meromorphic lower semi-Weyl. Then by Theorem \ref{pretheorem1} $\lambda I-T$ admits a $GKMD$ and $\lambda \notin \mbox{acc} \sigma_{lsbw}(T)$. Since $T^{*}$ has SVEP at every $\lambda \notin \sigma_{gDMW_{-}}(T)$ and  $ \sigma_{gDMW_{-}}(T) \subset \sigma_{lw}(T)$ then $T^{*}$ has SVEP at every $\lambda \notin \sigma_{lw}(T)=\sigma_{uw}(T^{*})$.  Therefore, by \cite[Theorem 5.27]{1} we have  $\sigma_{lw}(T)=\sigma_{uw}(T^{*})=\sigma_{ub}(T^{*})= \sigma_{lb}(T)$. Thus, by \cite[Theorem 5.38]{1} we have $\sigma_{lsbw}(T)= \sigma_{lsbb}(T)$. This implies that  $\lambda \notin \mbox{acc} \sigma_{lsbb}(T)$. Therefore, by  \cite[Theorem 2.6]{30} $\lambda I-T$ is generalized Drazin-meromorprhic surjective and it follows that $\lambda \notin \sigma_{gDM\mathcal{Q}}(T)$.  
\end{proof}
\begin{corollary}\label{corollary1}
Let $T \in B(X),$ then $\sigma_{gDM}(T)=\sigma_{gDMW}(T)$ if and only if $T$ and $T^{*}$ have SVEP at every $\lambda \notin \sigma_{gDMW}(T)$.
\end{corollary}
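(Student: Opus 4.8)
The strategy is to argue pointwise at each $\lambda\notin\sigma_{gDMW}(T)$, exploiting the $GKMD$ furnished by Theorem \ref{pretheorem1} and reusing the mechanism of Propositions \ref{theorem1} and \ref{theorem2}. The guiding principle is that an operator is invertible exactly when it is both bounded below and surjective, so on a single generalized Kato-meromorphic decomposition the SVEP of $T$ should make the semi-regular summand bounded below while the SVEP of $T^{*}$ should make it surjective, together yielding invertibility. Note that the present hypothesis places SVEP only outside $\sigma_{gDMW}(T)$, a smaller exceptional set than those of Propositions \ref{theorem1} and \ref{theorem2}, so I cannot simply invoke those statements and must re-run their arguments at the relevant points.

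First I would treat the forward implication. Assume $\sigma_{gDM}(T)=\sigma_{gDMW}(T)$ and fix $\lambda\notin\sigma_{gDMW}(T)=\sigma_{gDM}(T)$. Then $\lambda I-T$ is generalized Drazin-meromorphic invertible, so there is $(M,N)\in Red(\lambda I-T)$ with $(\lambda I-T)_{M}$ invertible and $(\lambda I-T)_{N}$ meromorphic. Since an invertible operator is simultaneously bounded below and surjective, $\lambda I-T$ is at once generalized Drazin-meromorphic bounded below and generalized Drazin-meromorphic surjective; hence \cite[Theorem 2.5]{30} yields SVEP of $T$ at $\lambda$ and \cite[Theorem 2.6]{30} yields SVEP of $T^{*}$ at $\lambda$, exactly as in the first halves of Propositions \ref{theorem1} and \ref{theorem2}.

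For the converse, suppose $T$ and $T^{*}$ have SVEP at every $\lambda\notin\sigma_{gDMW}(T)$. Because $\sigma_{gDMW}(T)\subseteq\sigma_{gDM}(T)$ always holds, it is enough to show that $\lambda\notin\sigma_{gDMW}(T)$ implies $\lambda\notin\sigma_{gDM}(T)$. Fix such a $\lambda$; then $\lambda I-T$ is generalized Drazin-meromorphic Weyl, so Theorem \ref{pretheorem1} provides $(M,N)\in Red(\lambda I-T)$ with $(\lambda I-T)_{M}$ semi-regular and $(\lambda I-T)_{N}$ meromorphic. As $T$ has SVEP at $\lambda$, the restriction $(\lambda I-T)_{M}$ inherits SVEP at $0$, and being semi-regular it is bounded below by \cite[Theorem 2.91]{1}; this is verbatim the argument of Proposition \ref{theorem1}.

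The decisive step is to promote ``bounded below'' to ``invertible'' via the hypothesis on $T^{*}$, and this is where I expect the only real work. The plan is to pass to adjoints: the reducing pair $(M,N)$ for $\lambda I-T$ induces a reducing pair for $(\lambda I-T)^{*}$ whose semi-regular summand is $\bigl((\lambda I-T)_{M}\bigr)^{*}$, since the adjoint of a semi-regular operator is again semi-regular and the meromorphic summand dualizes to a meromorphic operator. SVEP of $T^{*}$ at $\lambda$ then descends to SVEP of $\bigl((\lambda I-T)_{M}\bigr)^{*}$ at $0$, so \cite[Theorem 2.91]{1} makes $\bigl((\lambda I-T)_{M}\bigr)^{*}$ bounded below; by the duality between a bounded-below adjoint and a surjective operator, $(\lambda I-T)_{M}$ is surjective. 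Combined with the previous paragraph, $(\lambda I-T)_{M}$ is invertible while $(\lambda I-T)_{N}$ remains meromorphic, so $\lambda I-T$ is generalized Drazin-meromorphic invertible and $\lambda\notin\sigma_{gDM}(T)$, which closes the reverse inclusion. The one genuinely nontrivial point is the careful bookkeeping of the dual $GKMD$ and the precise identification of the semi-regular summand of $(\lambda I-T)^{*}$; everything else is a pointwise reprise of Propositions \ref{theorem1} and \ref{theorem2}.
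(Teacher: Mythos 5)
Your proof is correct, and the forward implication is essentially the paper's (the paper cites \cite[Theorem 2.4]{30} once where you cite Theorems 2.5 and 2.6 of \cite{30} separately). The converse, however, takes a genuinely different route. The paper decomposes $\sigma_{gDMW}(T)=\sigma_{gDMW_{+}}(T)\cup\sigma_{gDMW_{-}}(T)$ and $\sigma_{gDM}(T)=\sigma_{gDM\mathcal{J}}(T)\cup\sigma_{gDM\mathcal{Q}}(T)$ and then re-runs the proofs of Propositions \ref{theorem1} and \ref{theorem2}; the second of those proofs is a global spectral argument passing through $\sigma_{lw}(T)=\sigma_{lb}(T)$ and $\sigma_{lsbw}(T)=\sigma_{lsbb}(T)$ before invoking \cite[Theorem 2.6]{30}. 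You instead argue locally on a single $GKMD$ at $\lambda$: SVEP of $T$ makes the semi-regular summand bounded below, and SVEP of $T^{*}$, transported to the dual decomposition $X^{*}=N^{\perp}\oplus M^{\perp}$, makes it surjective, hence invertible, with the meromorphic summand untouched. The cost is exactly the duality bookkeeping you flag --- semi-regularity and meromorphicity pass to adjoints, $\bigl((\lambda I-T)_{M}\bigr)^{*}$ is identified with $\bigl((\lambda I-T)^{*}\bigr)_{N^{\perp}}$, and SVEP is inherited by restrictions of $T^{*}$ to closed invariant subspaces --- all of which is standard and sound. What it buys is a self-contained pointwise argument that sidesteps a delicate point in the paper's proof-by-reference: the proof of Proposition \ref{theorem2} uses SVEP of $T^{*}$ at every $\lambda\notin\sigma_{lw}(T)$, deduced there from the inclusion $\sigma_{gDMW_{-}}(T)\subset\sigma_{lw}(T)$, whereas the corollary only supplies SVEP off $\sigma_{gDMW}(T)$, and the inclusion $\sigma_{gDMW}(T)\subset\sigma_{lw}(T)$ is not automatic (a lower semi-Weyl operator of infinite positive index need not be generalized Drazin-meromorphic Weyl). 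Your local argument needs no such inclusion, so it is, if anything, the more robust of the two.
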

\begin{proof}
Suppose that $\sigma_{gDM}(T)=\sigma_{gDMW}(T).$ Let $\lambda \notin \sigma_{gDMW}(T)$, then $\lambda I-T$ is generalized Drazin-meromorphic invertible. Therefore, by  \cite[Theorem 2.4]{30}  $T$ and $T^{*}$ have SVEP at $\lambda$. Conversely, let $\lambda \notin \sigma_{gDMW}(T)=\sigma_{gDMW_{+}}(T) \cup \sigma_{gDMW_{-}}(T)$. Then by proofs of Theorem \ref{theorem1} and Theorem \ref{theorem2} we have $ \lambda \notin \sigma_{gDM\mathcal{J}}(T) \cup \sigma_{gDM\mathcal{Q}}(T)=\sigma_{gDM}(T).$
\end{proof}
\begin{theorem}\label{theorem3}
Let $T \in B(X),$ then following statements are equivalent:

(i) $\sigma_{gDM}(T)=\sigma_{gDMW}(T)$,

(ii) $T$ or $T^{*}$ have SVEP at every $\lambda \notin \sigma_{gDMW}(T)$.
\end{theorem}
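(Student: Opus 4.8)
The plan is to prove the nontrivial implication (ii)~$\Rightarrow$~(i), since (i)~$\Rightarrow$~(ii) is immediate: if $\sigma_{gDM}(T)=\sigma_{gDMW}(T)$, then Corollary~\ref{corollary1} gives that \emph{both} $T$ and $T^{*}$ have SVEP at every $\lambda\notin\sigma_{gDMW}(T)$, so in particular $T$ or $T^{*}$ does. For the converse I would not construct the generalized Drazin-meromorphic inverse directly; instead I would upgrade the disjunctive hypothesis in (ii) to the conjunctive hypothesis of Corollary~\ref{corollary1}. That is, I would show that if $T$ or $T^{*}$ has SVEP at every point outside $\sigma_{gDMW}(T)$, then in fact \emph{both} $T$ and $T^{*}$ have SVEP at every such point, and then quote Corollary~\ref{corollary1} to conclude (i).

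The engine of the upgrade is the following local fact, which I would isolate first: if $\mu I-T$ is B-Weyl and at least one of $T,T^{*}$ has SVEP at $\mu$, then $\mu I-T$ is B-Browder, hence Drazin invertible, so that $p(\mu I-T)=q(\mu I-T)<\infty$ and consequently \emph{both} $T$ and $T^{*}$ have SVEP at $\mu$. This is the pointwise form of generalized Browder's theorem: a B-Fredholm operator with SVEP (respectively, adjoint SVEP) at $\mu$ has finite ascent (respectively, finite descent), and a B-Weyl operator, having index $0$, has one of these finite as soon as the other is; I would record this by citing the B-Fredholm theory of \cite{1}. I expect this to be the main obstacle, since it is precisely the place where the index-zero content of the Weyl condition is converted into two-sided finiteness of ascent and descent, and hence into SVEP of both $T$ and $T^{*}$.

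With this in hand the upgrade proceeds as follows. First I would note that every B-Weyl operator is generalized Drazin-meromorphic Weyl, since a nilpotent summand is meromorphic; hence $\sigma_{gDMW}(T)\subseteq\sigma_{bw}(T)$. Now fix $\lambda\notin\sigma_{gDMW}(T)$ and suppose, toward a contradiction, that $T$ fails SVEP at $\lambda$. Because the set of points at which $T$ lacks SVEP is open and $\sigma_{gDMW}(T)$ is closed, there is a disc on which $T$ never has SVEP and which avoids $\sigma_{gDMW}(T)$. By Theorem~\ref{pretheorem1}, $\lambda\notin\mathrm{acc}\,\sigma_{bw}(T)$, so after shrinking the radius I may assume $\mu I-T$ is B-Weyl for every $\mu$ in the punctured disc. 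Picking any such $\mu$, it lies outside $\sigma_{gDMW}(T)$, so by (ii) one of $T,T^{*}$ has SVEP at $\mu$; since $T$ does not, $T^{*}$ does, and the local fact then forces $T$ to have SVEP at $\mu$, contradicting the choice of the disc. Thus $T$ has SVEP at every $\lambda\notin\sigma_{gDMW}(T)$, and the symmetric argument yields the same for $T^{*}$. Corollary~\ref{corollary1} now gives $\sigma_{gDM}(T)=\sigma_{gDMW}(T)$, which is (i).
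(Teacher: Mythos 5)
Your proof is correct, but it takes a genuinely different route from the paper's. The paper reads (ii) as a global disjunction and argues in two cases: if $T$ has SVEP at every $\lambda\notin\sigma_{gDMW}(T)$, then since $\sigma_{gDMW}(T)\subset\sigma_{bw}(T)$ this localized SVEP forces $\sigma_{bw}(T)=\sigma_{bb}(T)$ (the known equivalence of generalized Browder's theorem with SVEP off the B-Weyl spectrum), whereupon Theorem~\ref{pretheorem1} turns ``GKMD plus $\lambda\notin\mathrm{acc}\,\sigma_{bw}(T)=\mathrm{acc}\,\sigma_{bb}(T)$'' into generalized Drazin-meromorphic invertibility; the $T^{*}$ case is reduced to this one via the dualities $\sigma_{bb}(T)=\sigma_{bb}(T^{*})$ and $\sigma_{bw}(T)=\sigma_{bw}(T^{*})$. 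You instead keep the disjunction pointwise and upgrade it to the conjunctive hypothesis of Corollary~\ref{corollary1} by a local argument: openness of the non-SVEP set, the punctured disc of B-Weyl points supplied by $\lambda\notin\mathrm{acc}\,\sigma_{bw}(T)$, and the index-zero transfer between finite ascent and finite descent for B-Weyl operators. Both proofs rest on the same two pillars (the inclusion $\sigma_{gDMW}(T)\subset\sigma_{bw}(T)$ and Theorem~\ref{pretheorem1}), and your ``local fact'' is exactly the localized form of generalized Browder's theorem that the paper invokes globally. What your version buys is strictly more: it establishes the implication under the weaker pointwise reading of (ii) (``at each $\lambda$, $T$ or $T^{*}$ has SVEP there''), which the paper's two-case proof does not cover, and it recycles Corollary~\ref{corollary1} instead of re-deriving the conclusion. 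What the paper's version buys is brevity and the avoidance of the topological contradiction argument, at the cost of silently assuming the stronger global disjunction. One small economy available to you: you do not actually need closedness of $\sigma_{gDMW}(T)$ to find the disc, since the punctured disc avoiding $\sigma_{bw}(T)$ already avoids $\sigma_{gDMW}(T)$ by the inclusion above.
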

\begin{proof}
Suppose that $T$ has SVEP at every $\lambda \notin \sigma_{gDRW}(T)$. It suffices to prove that $\sigma_{gDM}(T) \subset \sigma_{gDMW}(T)$. Let $\lambda \notin  \sigma_{gDMW}(T)$ then $\lambda I-T$  admits a $GKMD$ and $\lambda \notin \mbox{acc} \sigma_{bw}(T)$. Since $\sigma_{gDRW}(T) \subset \sigma_{bw}(T)$,  $T$ has SVEP at every $\lambda \notin \sigma_{bw}(T)$. Therefore, $\sigma_{bw}(T)=\sigma_{bb}(T)$. Thus, $\lambda \notin \mbox{acc} \sigma_{bb}(T)$ which implies that $\lambda I-T$ is generalized Drazin-meromorphic invertible.

Now suppose that $T^{*}$ has SVEP at every  $\lambda \notin \sigma_{gDRW}(T)$. Since $\sigma_{bb}(T)=\sigma_{bb}(T^*)$ and $\sigma_{bw}(T)=\sigma_{bw}(T^*)$ we have $\sigma_{gDR}(T)=\sigma_{gDRW}(T)$. The converse is an immediate consequence of Corollary \ref{corollary1}.
\end{proof}
Recall that an operator $T \in B(X)$ is said satisfy  generalized a-Browder's theorem if $\sigma_{usbb}(T)=\sigma_{usbw}(T)$. An operator $T \in B(X)$ satisfies a-Browder's theorem if $\sigma_{ub}(T)=\sigma_{uw}(T)$. By \cite[Theorem 2.2]{21} we know that a-Browder's theorem is equivalent to generalized a-Browder's theorem.
\begin{theorem}\label{theorem4}
Let $T \in B(X)$, then the following holds:

(i) generalized a-Browder's theorem holds for $T$ if and only if $\sigma_{gDM\mathcal{J}}(T)=\sigma_{gDMW_{+}}(T)$,

(ii) generalized a-Browder's theorem holds for $T^*$ if and only if $\sigma_{gDM\mathcal{Q}}(T)=\sigma_{gDMW_{-}}(T)$,

(iii)generalized  Browder's theorem holds for $T$ if and only if $\sigma_{gDM}(T)=\sigma_{gDMW}(T)$.
\end{theorem}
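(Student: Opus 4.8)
The plan is to handle the three equivalences uniformly by using Proposition \ref{theorem1}, Proposition \ref{theorem2} and Corollary \ref{corollary1} to trade each spectral equality for an SVEP condition, and then to match that SVEP condition with the classical characterization of the corresponding Browder-type theorem. Recall from \cite{1} that $T$ satisfies a-Browder's theorem if and only if $T$ has SVEP at every $\lambda \notin \sigma_{uw}(T)$, that $T^{*}$ satisfies a-Browder's theorem if and only if $T^{*}$ has SVEP at every $\lambda \notin \sigma_{uw}(T^{*})=\sigma_{lw}(T)$, and that $T$ satisfies Browder's theorem if and only if $T$ has SVEP at every $\lambda \notin \sigma_{w}(T)$; by \cite{21} each of these is equivalent to its generalized counterpart. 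The forward implications of (i), (ii) and (iii) are then the easy ones: a Weyl (upper/lower semi-Weyl) operator is trivially generalized Drazin-meromorphic Weyl (take $N=\{0\}$), giving the inclusions $\sigma_{gDMW_{+}}(T)\subset\sigma_{uw}(T)$, $\sigma_{gDMW_{-}}(T)\subset\sigma_{lw}(T)$ and $\sigma_{gDMW}(T)\subset\sigma_{w}(T)$. Hence if $\sigma_{gDM\mathcal{J}}(T)=\sigma_{gDMW_{+}}(T)$, Proposition \ref{theorem1} gives SVEP for $T$ off $\sigma_{gDMW_{+}}(T)$, a fortiori off the smaller complement of $\sigma_{uw}(T)$, which yields (generalized) a-Browder for $T$; the cases (ii) and (iii) are identical, using Proposition \ref{theorem2} and Corollary \ref{corollary1} respectively.

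For the substantive converse directions I would run a punctured-disc argument. Assume the relevant Browder-type hypothesis, which by definition is a spectral equality: $\sigma_{usbw}(T)=\sigma_{usbb}(T)$ for (i), $\sigma_{lsbw}(T)=\sigma_{lsbb}(T)$ for (ii) (obtained from the $T^{*}$ hypothesis via $\sigma_{usbw}(T^{*})=\sigma_{lsbw}(T)$ and $\sigma_{usbb}(T^{*})=\sigma_{lsbb}(T)$), and $\sigma_{bw}(T)=\sigma_{bb}(T)$ for (iii). Fix $\lambda$ outside the appropriate generalized Drazin-meromorphic (semi-)Weyl spectrum. Theorem \ref{pretheorem1} then gives $\lambda\notin\mathrm{acc}\,\sigma_{usbw}(T)$ (resp. $\mathrm{acc}\,\sigma_{lsbw}(T)$, $\mathrm{acc}\,\sigma_{bw}(T)$), and the Browder equality converts this into $\lambda\notin\mathrm{acc}\,\sigma_{usbb}(T)$ (resp. $\mathrm{acc}\,\sigma_{lsbb}(T)$, $\mathrm{acc}\,\sigma_{bb}(T)$). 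Thus on a punctured disc about $\lambda$ the operator $\mu I-T$ is upper semi B-Browder (resp. lower semi B-Browder, B-Browder), i.e. left Drazin invertible, right Drazin invertible, or Drazin invertible by \cite[Theorem 3.47]{1}. Consequently $p(\mu I-T)<\infty$ (resp. $q(\mu I-T)<\infty$, or both), so $T$ (resp. $T^{*}$, resp. both) has SVEP at every such $\mu$; the punctured-disc stability of SVEP then propagates SVEP of $T$ (resp. $T^{*}$, resp. both) to $\lambda$ itself. Since $\lambda$ was arbitrary outside the relevant spectrum, Proposition \ref{theorem1} (resp. Proposition \ref{theorem2}, Corollary \ref{corollary1}) delivers the desired equality.

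The main obstacle I expect is the bookkeeping in this propagation step rather than any deep new idea: one must correctly pass from non-accumulation in a B-Weyl spectrum to non-accumulation in the associated B-Browder spectrum through the Browder hypothesis, and only then lift the local ascent/descent finiteness to SVEP at the center via the punctured-disc principle. Two points demand care. First, in (ii) the hypothesis is phrased for $T^{*}$, so I must consistently dualize the semi-B-Browder spectra and invoke the SVEP implication $q(\mu I-T)<\infty\Rightarrow T^{*}$ has SVEP at $\mu$. Second, in (iii) Corollary \ref{corollary1} requires SVEP of \emph{both} $T$ and $T^{*}$ off $\sigma_{gDMW}(T)$, and this is exactly what the simultaneous finiteness $p(\mu I-T)=q(\mu I-T)<\infty$ of a Drazin invertible $\mu I-T$ supplies, so the B-Browder (rather than merely semi-B-Browder) equality is essential there.
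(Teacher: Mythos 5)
Your proposal is correct and follows essentially the same route as the paper: both directions rest on Theorem \ref{pretheorem1}, the spectral equalities coming from the (generalized) Browder hypotheses, the inclusions $\sigma_{gDMW_{+}}(T)\subset\sigma_{uw}(T)$, $\sigma_{gDMW_{-}}(T)\subset\sigma_{lw}(T)$, $\sigma_{gDMW}(T)\subset\sigma_{bw}(T)$, and Propositions \ref{theorem1}, \ref{theorem2} and Corollary \ref{corollary1}. The only difference is in the final step of the substantive direction: where you pass from $\lambda\notin\mathrm{acc}\,\sigma_{usbb}(T)$ (resp.\ $\mathrm{acc}\,\sigma_{lsbb}(T)$, $\mathrm{acc}\,\sigma_{bb}(T)$) to SVEP at $\lambda$ via finite ascent/descent on a punctured disc and then reuse the Propositions, the paper concludes directly from the characterizations in \cite{30} that $\lambda I-T$ is generalized Drazin-meromorphic bounded below (resp.\ surjective, invertible) --- both mechanisms are valid and rely on the same underlying results.
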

\begin{proof}
(i) Suppose that generalized a-Browder's theorem holds for $T$ which implies that  $\sigma_{usbb}(T)=\sigma_{usbw}(T)$. It suffices to prove that $\sigma_{gDM\mathcal{J}}(T) \subset \sigma_{gDMW_{+}}(T)$. Let $\lambda \notin \sigma_{gDMW_{+}}(T)$, then $\lambda I-T$ is generalized Drazin-meromorphic upper semi-Weyl. By Theorem \ref{pretheorem1} it follows that $\lambda I-T$ admits a $GKMD$ and $\lambda \notin \mbox{acc} \sigma_{usbw}(T)$. This gives $\lambda \notin \mbox{acc} \sigma_{usbb}(T)$. Therefore, by  \cite[Theorem 2.5]{30} $\lambda I-T$ is generalized Drazin-meromorphic bounded below which gives $\lambda \notin \sigma_{gDM\mathcal{J}}(T)$.  Conversely, suppose that $\sigma_{gDM\mathcal{J}}(T)=\sigma_{gDMW_{+}}(T)$. Using Proposition \ref{theorem1} we deduce that $T$ has SVEP at every $\lambda \notin \sigma_{gDMW_{+}}(T)$. Since $\sigma_{gDMW_{+}}(T) \subset \sigma_{uw}(T)$, $T$ has SVEP at every $\lambda \notin \sigma_{uw}(T)$. By \cite[Theorem 5.27]{1} $T$ satisfies a-Browder's theorem.  Therefore,  generalized a-Browder's theorem holds for $T$.\\
(ii) Suppose that generalized a-Browder's theorem holds for $T^{*}$ which implies that $\sigma_{lsbb}(T)=\sigma_{lsbw}(T)$. It suffices to prove that $\sigma_{gDM\mathcal{Q}}(T) \subset \sigma_{gDMW_{-}}(T)$. Let $\lambda \notin \sigma_{gDMW_{-}}(T)$, then $\lambda I-T$ is generalized Drazin-meromorphic lower semi-Weyl. By Theorem \ref{pretheorem1} it follows that $\lambda I-T$ admits a $GKMD$ and $\lambda \notin \mbox{acc} \sigma_{lsbw}(T)$. This gives $\lambda \notin \mbox{acc} \sigma_{lsbb}(T)$. Therefore, by  \cite[Theorem 2.6]{30} $\lambda I-T$ is generalized Drazin-meromorphic surjective which gives $\lambda \notin \sigma_{gDM\mathcal{Q}}(T)$.  Conversely, suppose that $\sigma_{gDM\mathcal{Q}}(T)=\sigma_{gDMW_{-}}(T)$. Using Proposition \ref{theorem2} we deduce that $T^*$ has SVEP at every $\lambda \notin \sigma_{gDMW_{-}}(T)$. Since $\sigma_{gDMW_{-}}(T) \subset \sigma_{lw}(T)$, $T^*$ has SVEP at every $\lambda \notin \sigma_{lw}(T)=\sigma_{uw}(T^{*})$. Therefore,  generalized a-Browder's theorem holds for $T^*$.\\ 
(iii) Suppose that generalized Browder's theorem holds for $T$ which implies that $\sigma_{bb}(T)=\sigma_{bw}(T)$. It suffices to prove that $\sigma_{gDM}(T) \subset \sigma_{gDMW}(T)$. Let $\lambda \notin \sigma_{gDMW}(T)$, then $\lambda I-T$ is generalized Drazin-meromorphic Weyl. By Theorem \ref{pretheorem1} it follows that $\lambda I-T$ admits a $GKMD$ and $\lambda \notin \mbox{acc} \sigma_{bw}(T)$. This gives $\lambda \notin \mbox{acc} \sigma_{bb}(T)$. Therefore, by  \cite[Theorem 2.4]{30} $\lambda I-T$ is generalized Drazin-meromorphic invertible which gives $\lambda \notin \sigma_{gDM}(T)$.  Conversely, suppose that $\sigma_{gDM}(T)=\sigma_{gDMW}(T)$.  Using Proposition \ref{corollary1} we deduce that  $T$ and $T^*$ have SVEP at every $\lambda \notin \sigma_{gDMW}(T)$. Since $\sigma_{gDMW}(T) \subset \sigma_{bw}(T)$, $T$ and $T^*$ have SVEP at every $\lambda \notin \sigma_{bw}(T)$. Therefore, by \cite[Theorem 5.14]{1} generalized Browder's theorem holds for $T$. 
\end{proof}
Using Theorem \ref{theorem4}, \cite[Theorem 2.3]{32}, \cite[Theorem 2.1]{21}, \cite[Proposition 2.2]{22} and \cite[Theorem 2.6]{27} we have the following theorem:
\begin{theorem}
Let $T \in B(X)$, then the following statements are equivalent:

(i)  Browder's theorem holds for $T$,

(ii) Browder's theorem holds for $T^{*}$,

(iii) $T$ has SVEP at every $\lambda \notin \sigma_{w}(T)$,

(iv) $T^{*}$ has SVEP at every $\lambda \notin \sigma_{w}(T)$.

(v) $T$ has SVEP at every $\lambda \notin \sigma_{bw}(T)$.

(vi) generalized Browder's theorem holds for $T$.

(vii) $T$ or $T^{*}$ has SVEP at every $\lambda \notin \sigma_{gDRW}(T).$

(viii) $\sigma_{gDR}(T)=\sigma_{gDRW}(T)$, 

(ix) $T$ or $T^{*}$ has SVEP at every $\lambda \notin \sigma_{gDMW}(T)$,

(x) $\sigma_{gDM}(T)=\sigma_{gDMW}(T)$,

(xi) $\sigma_{gD}(T)=\sigma_{pBW}(T)$.
\end{theorem}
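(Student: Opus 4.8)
The plan is to prove the eleven statements equivalent by routing each of them through statement (vi), that generalized Browder's theorem holds for $T$; this is cleaner than a single long cycle and splits the work into three blocks whose common hub is (vi): the classical Browder block (i)--(v), the generalized Drazin-meromorphic block (ix)--(x), and the generalized Drazin-Riesz / pseudo B-Weyl block (vii), (viii), (xi).

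For the classical block I would first note that the Weyl and Browder spectra are adjoint-invariant, $\sigma_w(T)=\sigma_w(T^*)$ and $\sigma_b(T)=\sigma_b(T^*)$, so that (i)$\Leftrightarrow$(ii) is immediate from the defining equality $\sigma_b=\sigma_w$. The equivalences (i)$\Leftrightarrow$(iii) and (i)$\Leftrightarrow$(iv) are the standard SVEP characterizations of Browder's theorem, namely that $T$ satisfies Browder's theorem exactly when $T$ (respectively $T^*$) has SVEP at every $\lambda\notin\sigma_w(T)$, taken from \cite{1}. To reach (v) and (vi) I would invoke the equivalence of Browder's theorem and generalized Browder's theorem, $\sigma_b=\sigma_w \Leftrightarrow \sigma_{bb}=\sigma_{bw}$, from \cite[Theorem 2.1]{21}, together with the SVEP description of generalized Browder's theorem on $\mathbb{C}\setminus\sigma_{bw}(T)$ from \cite{1}; \cite[Theorem 2.3]{32} and \cite[Proposition 2.2]{22} supply any remaining links among these. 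This block is essentially citation assembly and should be routine.

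For the generalized Drazin-meromorphic block the connection to (vi) is furnished directly by Theorem \ref{theorem4}(iii), which gives (vi)$\Leftrightarrow$(x). The SVEP form (ix) is then tied to (x) by Corollary \ref{corollary1} (equivalently Theorem \ref{theorem3}), which asserts that $\sigma_{gDM}(T)=\sigma_{gDMW}(T)$ holds if and only if $T$ or $T^*$ has SVEP at every $\lambda\notin\sigma_{gDMW}(T)$; hence (ix)$\Leftrightarrow$(x)$\Leftrightarrow$(vi) with no additional computation. For the generalized Drazin-Riesz and pseudo B-Weyl statements I would appeal to \cite[Theorem 2.6]{27}, which characterizes Browder's theorem by both $\sigma_{gDR}(T)=\sigma_{gDRW}(T)$ and $\sigma_{gD}(T)=\sigma_{pBW}(T)$, yielding (i)$\Leftrightarrow$(viii)$\Leftrightarrow$(xi); the SVEP form (vii) is the Riesz analogue of Theorem \ref{theorem3} and can be handled by the same argument, using that $T$ (or $T^*$) has SVEP on $\mathbb{C}\setminus\sigma_{gDRW}(T)$ precisely when $\sigma_{gDR}(T)=\sigma_{gDRW}(T)$, where the inclusion $\sigma_{gDRW}(T)\subset\sigma_{bw}(T)$ together with $\sigma_{bw}(T)=\sigma_{bw}(T^*)$ and $\sigma_{bb}(T)=\sigma_{bb}(T^*)$ makes the two SVEP hypotheses coincide.

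The genuine content, and the step I expect to require the most care, is the confluence of the meromorphic and the Riesz descriptions. A priori the meromorphic spectra are contained in their Riesz counterparts, $\sigma_{gDM}(T)\subset\sigma_{gDR}(T)$ and $\sigma_{gDMW}(T)\subset\sigma_{gDRW}(T)$, so it is not transparent that the equality $\sigma_{gDM}=\sigma_{gDMW}$ carries the same information as $\sigma_{gDR}=\sigma_{gDRW}$. What makes everything collapse to the single condition (vi) is that both equalities are controlled by SVEP on the complement of the relevant Weyl-type spectrum, SVEP-at-a-point being the common denominator; the role of Theorem \ref{pretheorem1}, converting a generalized Drazin-meromorphic Weyl hypothesis into a $GKMD$ together with an accumulation-point condition on $\sigma_{bw}$, is precisely what lets the meromorphic equality be read off from the same SVEP data that governs the Riesz case. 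Verifying that these accumulation-point and $GKMD$ hypotheses are applied consistently across the gDM and gDR settings is where I would exercise the most caution.
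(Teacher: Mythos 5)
Your proposal is correct and follows essentially the same route as the paper, which offers no written proof at all but simply derives the theorem from the citation list preceding its statement (Theorem \ref{theorem4}, Theorem \ref{theorem3}/Corollary \ref{corollary1}, and the results of Aiena--Biondi \cite{32}, Amouch--Zguitti \cite{21,22} and Karmouni--Tajmouati \cite{27}); your hub-and-spoke assembly through (vi) is precisely that argument made explicit. The only small point to watch is that for (ix) you need the ``or'' version, i.e.\ Theorem \ref{theorem3} rather than the ``and'' version in Corollary \ref{corollary1}, which you already note.
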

Using \cite[Theorem 2.2]{21} and \cite[Theorem 2.7]{27} a similar result for a-Browder's theorem can be stated as follows:
\begin{theorem}
Let $T \in B(X)$, then the following statements are equivalent:

(i) a-Browder's theorem holds for $T$,

(ii) generalized a-Browder's theorem holds for $T$,

(iii) $T$ has SVEP at every $\lambda \notin \sigma_{gDRW_{+}}(T)$,

(iv) $\sigma_{gDR\mathcal{J}}(T)=\sigma_{gDRW_{+}}(T)$,

(v) $T$  has SVEP at every $\lambda \notin \sigma_{gDMW_{+}}(T)$,

(vi) $\sigma_{gDM\mathcal{J}}(T)=\sigma_{gDMW_{+}}(T)$.
\end{theorem}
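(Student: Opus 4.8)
The plan is to route every statement through condition (ii), generalized a-Browder's theorem, since each of the remaining conditions has already been tied to it by results established earlier in the paper. First I would record that (i) $\Leftrightarrow$ (ii) is exactly \cite[Theorem 2.2]{21}, the equivalence of a-Browder's theorem and generalized a-Browder's theorem, so nothing is needed there. Next I would dispose of the meromorphic block: Theorem \ref{theorem4}(i) gives (ii) $\Leftrightarrow$ (vi), namely that generalized a-Browder's theorem for $T$ is equivalent to $\sigma_{gDM\mathcal{J}}(T)=\sigma_{gDMW_{+}}(T)$, while Proposition \ref{theorem1} gives (vi) $\Leftrightarrow$ (v), that this spectral equality is equivalent to $T$ having SVEP at every $\lambda \notin \sigma_{gDMW_{+}}(T)$. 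Chaining these yields (ii) $\Leftrightarrow$ (v) $\Leftrightarrow$ (vi), so the meromorphic half of the theorem reduces entirely to the two results just proved.

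For the Riesz block (iii)--(iv), I would invoke \cite[Theorem 2.7]{27}, which is the generalized Drazin-Riesz analogue of Proposition \ref{theorem1} together with Theorem \ref{theorem4}(i), and which supplies (ii) $\Leftrightarrow$ (iii) $\Leftrightarrow$ (iv) directly. To keep the argument self-contained I would instead mirror the meromorphic proofs: at a point $\lambda \notin \sigma_{gDRW_{+}}(T)$ the operator $\lambda I-T$ is generalized Drazin-Riesz upper semi-Weyl, so by the Kato-Riesz analogue of Theorem \ref{pretheorem1} (from \cite{10}) it admits a $GKRD$ and satisfies $\lambda \notin \mbox{acc}\,\sigma_{usbw}(T)$. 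Under generalized a-Browder's theorem one has $\sigma_{usbw}(T)=\sigma_{usbb}(T)$, hence $\lambda \notin \mbox{acc}\,\sigma_{usbb}(T)$, so $p(\mu I-T)<\infty$ on a punctured disc about $\lambda$ and $T$ has SVEP at each such $\mu$; the punctured-disc continuation of SVEP noted in the preliminaries then gives SVEP at $\lambda$, proving (ii) $\Rightarrow$ (iii). The converse (iii) $\Rightarrow$ (iv) follows the template of Proposition \ref{theorem1}: SVEP at $\lambda$ passes to the semi-regular part $(\lambda I-T)_M$ of the $GKRD$, forcing $(\lambda I-T)_M$ to be bounded below by \cite[Theorem 2.91]{1}, whence $\lambda I-T$ is generalized Drazin-Riesz bounded below and $\lambda \notin \sigma_{gDR\mathcal{J}}(T)$; the reverse implication (iv) $\Rightarrow$ (iii) is the easy direction, since at $\lambda \notin \sigma_{gDR\mathcal{J}}(T)$ the operator $\lambda I-T$ is generalized Drazin-Riesz bounded below and hence $T$ has SVEP at $\lambda$.

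The unifying observation behind all these links is the inclusion chain $\sigma_{gDMW_{+}}(T),\,\sigma_{gDRW_{+}}(T)\subset \sigma_{uw}(T)$ recorded in the preliminaries, together with \cite[Theorem 5.27]{1}, which identifies a-Browder's theorem with SVEP on $\mathbb{C}\setminus \sigma_{uw}(T)$. The easy direction, passing from SVEP on the complement of the larger set $\sigma_{gDMW_{+}}(T)$ (or $\sigma_{gDRW_{+}}(T)$) to SVEP on the complement of the smaller set $\sigma_{uw}(T)$, is immediate from the inclusion. I expect the only real obstacle to be the reverse passage, where SVEP must be produced at the extra points of $\sigma_{uw}(T)\setminus \sigma_{gDMW_{+}}(T)$; this is precisely where the decomposition theorems (Theorem \ref{pretheorem1} and its Kato-Riesz counterpart) and the punctured-disc continuation of SVEP do the genuine work. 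Since that work is already packaged inside Proposition \ref{theorem1}, Theorem \ref{theorem4}(i), and the cited results \cite[Theorem 2.2]{21} and \cite[Theorem 2.7]{27}, the remaining task is merely to assemble these equivalences around the pivot (ii).
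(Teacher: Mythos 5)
Your proposal is correct and follows essentially the same route as the paper, which states this theorem without proof as an assembly of \cite[Theorem 2.2]{21} for (i)$\Leftrightarrow$(ii), \cite[Theorem 2.7]{27} for the Riesz block (iii)--(iv), and its own Theorem \ref{theorem4}(i) together with Proposition \ref{theorem1} for the meromorphic block (v)--(vi). Your additional self-contained sketch of the Riesz block via the $GKRD$ analogue of Theorem \ref{pretheorem1} is sound and simply reproduces in the Riesz setting what the paper delegates to \cite{27}.
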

\begin{lemma}\label{lemma1}
Let $T \in B(X)$, then 

(i) $\sigma_{uf}(T)=\sigma_{ub}(T) \Leftrightarrow \sigma_{usbf}(T)=\sigma_{usbb}(T)$,

(ii)  $\sigma_{lf}(T)=\sigma_{lb}(T) \Leftrightarrow \sigma_{lsbf}(T)=\sigma_{lsbb}(T)$.
\end{lemma}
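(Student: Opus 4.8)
The plan is to show that each of the two equalities is a reformulation of a local SVEP condition and that these conditions coincide. For part (i) I would first record the standing inclusions $\sigma_{usbf}(T)\subseteq\sigma_{uf}(T)$ (an upper semi-Fredholm operator is upper semi B-Fredholm, taking $n=0$), $\sigma_{uf}(T)\subseteq\sigma_{ub}(T)$ and $\sigma_{usbf}(T)\subseteq\sigma_{usbb}(T)$, so that in each equality only one inclusion needs proof. I would then use two standard local facts from \cite{1}: for $\lambda I-T$ upper semi-Fredholm, $T$ has SVEP at $\lambda$ iff $p(\lambda I-T)<\infty$ iff $\lambda I-T$ is upper semi-Browder; and for $\lambda I-T$ upper semi B-Fredholm, $T$ has SVEP at $\lambda$ iff $p(\lambda I-T)<\infty$ iff $\lambda I-T$ is left Drazin invertible, i.e.\ upper semi B-Browder by \cite[Theorem 3.47]{1}.

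For the implication $\sigma_{usbf}(T)=\sigma_{usbb}(T)\Rightarrow\sigma_{uf}(T)=\sigma_{ub}(T)$, take $\lambda\notin\sigma_{uf}(T)$. Then $\lambda I-T$ is upper semi-Fredholm, hence upper semi B-Fredholm, so $\lambda\notin\sigma_{usbf}(T)=\sigma_{usbb}(T)$; thus $\lambda I-T$ is left Drazin invertible and $p(\lambda I-T)<\infty$. Being upper semi-Fredholm with finite ascent, $\lambda I-T$ is upper semi-Browder, i.e.\ $\lambda\notin\sigma_{ub}(T)$. This is the routine direction and requires no spectral perturbation.

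The substantive direction is $\sigma_{uf}(T)=\sigma_{ub}(T)\Rightarrow\sigma_{usbf}(T)=\sigma_{usbb}(T)$. Fix $\lambda\notin\sigma_{usbf}(T)$. The key input is the punctured-neighbourhood theorem for upper semi B-Fredholm operators: there is $\varepsilon>0$ such that $\mu I-T$ is upper semi-Fredholm for $0<|\mu-\lambda|<\varepsilon$. For each such $\mu$ we have $\mu\notin\sigma_{uf}(T)=\sigma_{ub}(T)$, so $\mu I-T$ is upper semi-Browder, whence $p(\mu I-T)<\infty$ and $T$ has SVEP at $\mu$. Thus $T$ has SVEP on the whole punctured disc $\mathbb{D}\setminus\{\lambda\}$, and by the propagation property recalled in the Preliminaries ($T$ has SVEP at the centre of a punctured disc on which it has SVEP) $T$ has SVEP at $\lambda$. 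Since $\lambda I-T$ is upper semi B-Fredholm and $T$ has SVEP at $\lambda$, the second local fact gives $p(\lambda I-T)<\infty$, so $\lambda I-T$ is left Drazin invertible and $\lambda\notin\sigma_{usbb}(T)$. This yields $\sigma_{usbb}(T)\subseteq\sigma_{usbf}(T)$ and completes (i). I expect the punctured-neighbourhood step to be the main obstacle, since it is exactly where the B-Fredholm structure (the Kato-type decomposition) is invoked to pass from the ``B'' spectrum back to the ordinary semi-Fredholm region; the rest is bookkeeping with ascent and SVEP.

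Finally, I would obtain part (ii) from (i) by duality rather than repeating the argument: using $\sigma_{lf}(T)=\sigma_{uf}(T^{*})$, $\sigma_{lb}(T)=\sigma_{ub}(T^{*})$, $\sigma_{lsbf}(T)=\sigma_{usbf}(T^{*})$ and $\sigma_{lsbb}(T)=\sigma_{usbb}(T^{*})$ (the lower semi-Fredholm/Browder classes and their B-analogues for $T$ correspond to the upper versions for $T^{*}$, with right Drazin invertibility of $T$ matching left Drazin invertibility of $T^{*}$), the equivalence in (ii) for $T$ is precisely the equivalence in (i) applied to $T^{*}$. Alternatively, one runs the same proof verbatim with the descent $q(\cdot)$ and the SVEP of $T^{*}$ in place of the ascent and the SVEP of $T$.
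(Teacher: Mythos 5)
Your proposal is correct and follows essentially the same route as the paper: the nontrivial inclusion is obtained from the punctured-neighbourhood theorem for upper semi B-Fredholm operators (the paper cites \cite[Theorem 1.117]{1}), the propagation of SVEP from the punctured disc to its centre, and the characterization of upper semi B-Browder via SVEP for semi B-Fredholm operators (\cite[Theorem 2.5]{33}), while the reverse inclusion is the same finite-ascent bookkeeping. The only cosmetic difference is that for (ii) the paper simply repeats the argument with descent and the SVEP of $T^{*}$, which is your stated alternative to the duality reduction.
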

\begin{proof}
(i) Let $\sigma_{uf}(T)=\sigma_{ub}(T)$. It suffices to show that $\sigma_{usbb}(T)=\sigma_{usbf}(T)$. Let $\lambda_{0} \notin \sigma_{usbf}(T)$. Then $\lambda_{0} I-T$ is upper semi B-Fredholm. Therefore, by \cite[Theorem 1.117]{1} there exists an open disc $\mathbb{D}$ centered at $\lambda_{0}$ such that  $\lambda I-T$ is upper semi-Fredholm for all $\lambda \in \mathbb{D} \setminus \{\lambda_{0}\}$. Since  $\sigma_{uf}(T)=\sigma_{ub}(T)$, $\lambda I-T$ is upper semi-Browder for all $\lambda \in \mathbb{D} \setminus \{\lambda_{0}\}$. Therefore, $p(\lambda I-T) <  \infty$ for all $\lambda \in \mathbb{D} \setminus \{\lambda_{0}\}$. Thus, $T$ has SVEP at every $ \lambda \in \mathbb{D} \setminus \{\lambda_{0}\}$ which gives $T$ has SVEP at $\lambda_{0}$. Thus, by \cite[Theorem 2.5]{33} it follows that $\lambda \notin \sigma_{usbb}(T)$. Conversely, let  $\sigma_{usbb}(T)=\sigma_{usbf}(T)$. It suffices to show that $\sigma_{ub}(T) \subset \sigma_{uf}(T)$. Let $\lambda \notin \sigma_{uf}(T)$. Then $\lambda \notin \sigma_{usbf}(T)=\sigma_{usbb}(T)$. Therefore, $p(\lambda I-T) < \infty$ which implies that $\lambda \notin \sigma_{ub}(T)$.
(ii) Using a similar argument as above we can get the desired result.
\end{proof}
\begin{theorem}\label{theorem5}
Let $T \in B(X)$, then the following statements are equivalent:

(i) $\sigma_{usbf}(T)=\sigma_{usbb}(T)$,

(ii) $T$ has SVEP at every $\lambda \notin\sigma_{usbf}(T)$,

(iii) $T$ has SVEP at every $\lambda \notin\sigma_{gDM\phi_{+}}(T)$,

(iv) $\sigma_{gDM\mathcal{J}}(T)=\sigma_{gDM\phi_{+}}(T)$.
\end{theorem}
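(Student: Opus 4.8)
The plan is to prove the four statements equivalent by establishing the three pairwise equivalences (i)$\Leftrightarrow$(ii), (ii)$\Leftrightarrow$(iii), and (iii)$\Leftrightarrow$(iv), in each case recycling the SVEP machinery already set up in Lemma \ref{lemma1} and Proposition \ref{theorem1}. Throughout I would use that $\sigma_{usbf}(T)$, $\sigma_{usbb}(T)$ and $\sigma_{gDM\phi_{+}}(T)$ are closed and that SVEP on a punctured disc forces SVEP at the centre.

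For (i)$\Leftrightarrow$(ii) I would argue exactly as in Lemma \ref{lemma1}(i). If $\sigma_{usbf}(T)=\sigma_{usbb}(T)$ and $\lambda\notin\sigma_{usbf}(T)$, then $\lambda I-T$ is upper semi B-Browder, hence left Drazin invertible, so $p(\lambda I-T)<\infty$ and $T$ has SVEP at $\lambda$. Conversely, if $T$ has SVEP at each $\lambda\notin\sigma_{usbf}(T)$, then for such $\lambda$ the operator $\lambda I-T$ is upper semi B-Fredholm with SVEP at $\lambda$, so $\lambda I-T$ is upper semi B-Browder by \cite[Theorem 2.5]{33}; since $\sigma_{usbf}(T)\subseteq\sigma_{usbb}(T)$ always holds, equality follows.

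The step (ii)$\Leftrightarrow$(iii) rests on the inclusion $\sigma_{gDM\phi_{+}}(T)\subseteq\sigma_{usbf}(T)$. To obtain it, note that if $\lambda\notin\sigma_{usbf}(T)$ then $\lambda I-T$ is upper semi B-Fredholm, hence of Kato type, so it admits a $GKMD$ (its nilpotent part being meromorphic) and, $\sigma_{usbf}(T)$ being closed, $\lambda\notin\mathrm{acc}\,\sigma_{usbf}(T)$; by Theorem \ref{pretheorem1} this yields $\lambda\notin\sigma_{gDM\phi_{+}}(T)$. Consequently (iii)$\Rightarrow$(ii) is immediate, since SVEP is then assumed on the larger complement. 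For (ii)$\Rightarrow$(iii), take $\lambda\notin\sigma_{gDM\phi_{+}}(T)$; if also $\lambda\notin\sigma_{usbf}(T)$ then SVEP at $\lambda$ holds by (ii), while if $\lambda\in\sigma_{usbf}(T)$ then Theorem \ref{pretheorem1} forces $\lambda\notin\mathrm{acc}\,\sigma_{usbf}(T)$, so there is a punctured disc about $\lambda$ missing $\sigma_{usbf}(T)$ on which $T$ has SVEP, whence $T$ has SVEP at $\lambda$. Finally (iii)$\Leftrightarrow$(iv) is the analogue of Proposition \ref{theorem1} with $\phi_{+}$ in place of $W_{+}$, using $\sigma_{gDM\phi_{+}}(T)\subseteq\sigma_{gDM\mathcal{J}}(T)$ from the inclusion chain: if the two spectra coincide and $\lambda\notin\sigma_{gDM\phi_{+}}(T)$, then $\lambda I-T$ is generalized Drazin-meromorphic bounded below and \cite[Theorem 2.5]{30} gives SVEP at $\lambda$, while conversely, assuming (iii), Theorem \ref{pretheorem1} supplies $(M,N)\in Red(\lambda I-T)$ with $(\lambda I-T)_M$ semi-regular and $(\lambda I-T)_N$ meromorphic, SVEP at $\lambda$ passes to $(\lambda I-T)_M$, which is then bounded below by \cite[Theorem 2.91]{1}, so $\lambda I-T$ is generalized Drazin-meromorphic bounded below by \cite[Theorem 2.6]{30} and $\lambda\notin\sigma_{gDM\mathcal{J}}(T)$.

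I expect the only real friction to be the inclusion $\sigma_{gDM\phi_{+}}(T)\subseteq\sigma_{usbf}(T)$ together with the SVEP transfer in (ii)$\Rightarrow$(iii): one must be sure that an upper semi B-Fredholm operator genuinely admits a $GKMD$ via its Kato-type decomposition, and that the isolation of $\lambda$ in $\sigma_{usbf}(T)$, guaranteed by Theorem \ref{pretheorem1}, is precisely what lets the punctured-disc SVEP property do its work. The remaining implications are essentially bookkeeping with the inclusion chain and the cited SVEP criteria.
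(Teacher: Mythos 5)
Your proposal is correct, and two of its three links --- (i)$\Leftrightarrow$(ii) via $p(\lambda I-T)<\infty$ and \cite[Theorem 2.5]{33}, and (iii)$\Leftrightarrow$(iv) via Theorem \ref{pretheorem1}, restriction of SVEP to $(\lambda I-T)_M$, and \cite[Theorem 2.91]{1} --- coincide with the paper's argument. Where you diverge is in how the ``B-Fredholm'' pair (i),(ii) is tied to the ``generalized Drazin-meromorphic'' pair (iii),(iv): the paper proves (i)$\Leftrightarrow$(iv) directly, getting (iv)$\Rightarrow$(i) by descending to the ordinary upper semi-Fredholm spectrum (the inclusion $\sigma_{gDM\phi_{+}}(T)\subset\sigma_{uf}(T)$, then \cite[Theorem 2.8]{27} to obtain $\sigma_{uf}(T)=\sigma_{ub}(T)$, then Lemma \ref{lemma1} to lift this to the B-spectra), whereas you prove (ii)$\Leftrightarrow$(iii) directly via the reverse-type inclusion $\sigma_{gDM\phi_{+}}(T)\subseteq\sigma_{usbf}(T)$ together with the punctured-disc SVEP transfer at points of $\sigma_{usbf}(T)\setminus\sigma_{gDM\phi_{+}}(T)$. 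Your route is slightly more self-contained (it dispenses with \cite[Theorem 2.8]{27} and with Lemma \ref{lemma1} for this link), but it leans on the fact that an upper semi B-Fredholm operator admits a Kato-type decomposition --- hence a $GKMD$ with nilpotent (so meromorphic) part --- and that $\sigma_{usbf}(T)$ is closed; the paper only records the decomposition result for B-Fredholm operators via \cite[Theorem 2.7]{23}, so you should cite the semi-B-Fredholm version (Berkani's decomposition theorem, or \cite[Theorem 1.117]{1} as used in Lemma \ref{lemma1}) explicitly to make the inclusion $\sigma_{gDM\phi_{+}}(T)\subseteq\sigma_{usbf}(T)$ airtight. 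With that reference supplied, your argument is complete and the punctured-disc step is exactly the SVEP localization principle stated in the preliminaries.
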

\begin{proof}
(i) $\Leftrightarrow$ (ii) Suppose that $\sigma_{usbf}(T)=\sigma_{usbb}(T)$. Let $\lambda \notin \sigma_{usbf}(T)$, then $\lambda \notin \sigma_{usbb}(T)$ which gives $p(\lambda I-T)< \infty$. Therefore, $T$ has SVEP at $\lambda$. Now suppose that $T$ has SVEP at every $\lambda \notin \sigma_{usbf}(T)$.  It suffices to prove that $\sigma_{usbb}(T) \subset \sigma_{usbf}(T)$.  Let $\lambda \notin \sigma_{usbf}(T)$, then $\lambda I-T$ is upper semi B-Fredholm operator. Since $T$ has SVEP at $\lambda$ then  by \cite[Theorem 2.5]{33}  it follows that $\lambda \notin \sigma_{usbb}(T)$.\\
(iii) $\Leftrightarrow$ (iv) Suppose that $T$ has SVEP at every $\lambda \notin \sigma_{gDM\phi_{+}}(T)$ which implies that $\lambda I-T$ is generalized Drazin-meromorphic upper semi-Fredholm. It suffices to show that $\sigma_{gDM\mathcal{J}}(T) \subset \sigma_{gDM\phi_{+}}(T)$. Let $\lambda \notin \sigma_{gDM\phi_{+}}(T)$, then by Theorem \ref{pretheorem1} there exists $(M,N) \in Red(\lambda I-T)$ such that $(\lambda I-T)_M$ is semi-regular  and $(\lambda I-T)_N$ is meromorphic. Since $T$ has SVEP at $\lambda$, $(\lambda I-T)_M$ has SVEP at $0$. Therefore, by \cite[Theorem 2.91]{1} $(\lambda I-T)_M$ is bounded below. Thus, $\lambda \notin \sigma_{gDM\mathcal{J}}(T)$.  Conversely, suppose that $\sigma_{gDM\mathcal{J}}(T)=\sigma_{gDM\phi_{+}}(T)$.  Let $\lambda \notin \sigma_{gDR\phi_{+}}(T)$, then $ \lambda I-T$ is generalized Drazin-meromorphic bounded below. Therefore, by  \cite[Theorem 2.5]{30} it follows that $T$ has SVEP at $\lambda$.\\
(i) $\Leftrightarrow$ (iv) Suppose that $\sigma_{usbf}(T)=\sigma_{usbb}(T)$.  It suffices to prove that $\sigma_{gDM\mathcal{J}}(T) \subset \sigma_{gDM\phi_{+}}(T)$. Let $\lambda \notin \sigma_{gDM\phi_{+}}(T)$, then $\lambda I-T$ is generalized Drazin-meromorphic upper semi-Fredholm. By Theorem \ref{pretheorem1} it follows that $\lambda I-T$ admits a $GKMD$ and $\lambda \notin \mbox{acc} \sigma_{usbf}(T)$. This gives $\lambda \notin \mbox{acc}\sigma_{usbb}(T)$. Therefore, by  \cite[Theorem 2.5]{30} $\lambda I-T$ is generalized Drazin-meromorphic bounded below which gives $\lambda \notin \sigma_{gDM\mathcal{J}}(T)$.  Conversely, suppose that $\sigma_{gDM\mathcal{J}}(T)=\sigma_{gDM\phi_{+}}(T)$. Then by (iv) $\Rightarrow$ (iii) $T$ has SVEP at every $\lambda \notin \sigma_{gDM\phi_{+}}(T)$. Since $\sigma_{gDM\phi_{+}}(T) \subset \sigma_{uf}(T)$, $T$ has SVEP at every $\lambda \notin \sigma_{uf}(T)$. Therefore, by \cite[Theorem 2.8]{27} we have $\sigma_{uf}=\sigma_{ub}(T)$. Thus, by Lemma \ref{lemma1} $\sigma_{usbf}=\sigma_{usbb}(T)$. 
\end{proof}
\begin{theorem}\label{theorem6}
Let $T \in B(X)$, then the following statements are equivalent:

(i) $\sigma_{lsbf}(T)=\sigma_{lsbb}(T)$,

(ii) $T^*$ has SVEP at every $\lambda \notin\sigma_{lsbf}(T)$,

(iii) $T^*$ has SVEP at every $\lambda \notin\sigma_{gDM\phi_{-}}(T)$,

(iv) $\sigma_{gDM\mathcal{Q}}(T)=\sigma_{gDM\phi_{-}}(T)$.
\end{theorem}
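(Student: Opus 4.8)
The plan is to mirror the proof of Theorem \ref{theorem5} under the duality that interchanges SVEP of $T$ with SVEP of $T^{*}$, ascent with descent, bounded below with surjective, and ``upper'' with ``lower'' throughout; one then proves the three equivalences (i)$\Leftrightarrow$(ii), (iii)$\Leftrightarrow$(iv), and (i)$\Leftrightarrow$(iv), which together close the cycle. For (i)$\Leftrightarrow$(ii): in the forward direction, if $\sigma_{lsbf}(T)=\sigma_{lsbb}(T)$ and $\lambda\notin\sigma_{lsbf}(T)$, then $\lambda\notin\sigma_{lsbb}(T)$, so $\lambda I-T$ is lower semi B-Browder, hence right Drazin invertible, giving $q(\lambda I-T)<\infty$ and therefore SVEP of $T^{*}$ at $\lambda$. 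For the converse I would take $\lambda\notin\sigma_{lsbf}(T)$, so that $\lambda I-T$ is lower semi B-Fredholm, and invoke the dual (descent) version of \cite[Theorem 2.5]{33} — that a lower semi B-Fredholm operator at which $T^{*}$ has SVEP is lower semi B-Browder — to conclude $\lambda\notin\sigma_{lsbb}(T)$.

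For (iii)$\Leftrightarrow$(iv), I would use Theorem \ref{pretheorem1}: a point $\lambda\notin\sigma_{gDM\phi_{-}}(T)$ furnishes a $GKMD$ with $(\lambda I-T)_M$ semi-regular and $(\lambda I-T)_N$ meromorphic. SVEP of $T^{*}$ at $\lambda$ restricts to SVEP of $((\lambda I-T)_M)^{*}$ at $0$, and the surjectivity part of \cite[Theorem 2.91]{1} (semi-regular together with SVEP of the adjoint forces surjectivity) makes $(\lambda I-T)_M$ surjective; by \cite[Theorem 2.6]{30} this yields $\lambda\notin\sigma_{gDM\mathcal{Q}}(T)$. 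The reverse implication simply reads off SVEP of $T^{*}$ from generalized Drazin-meromorphic surjectivity via \cite[Theorem 2.6]{30}.

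For (i)$\Leftrightarrow$(iv), the forward direction again passes through Theorem \ref{pretheorem1}: from $\lambda\notin\sigma_{gDM\phi_{-}}(T)$ one gets $\lambda\notin\operatorname{acc}\sigma_{lsbf}(T)=\operatorname{acc}\sigma_{lsbb}(T)$, so \cite[Theorem 2.6]{30} gives generalized Drazin-meromorphic surjectivity and hence $\lambda\notin\sigma_{gDM\mathcal{Q}}(T)$. For the converse I would invoke the implication (iv)$\Rightarrow$(iii) just established, use the inclusion $\sigma_{gDM\phi_{-}}(T)\subset\sigma_{lf}(T)$ to get SVEP of $T^{*}$ off $\sigma_{lf}(T)$, and then dualize: since $\sigma_{lf}(T)=\sigma_{uf}(T^{*})$, \cite[Theorem 2.8]{27} applied to $T^{*}$ yields $\sigma_{uf}(T^{*})=\sigma_{ub}(T^{*})$, i.e.\ $\sigma_{lf}(T)=\sigma_{lb}(T)$, whereupon Lemma \ref{lemma1}(ii) delivers (i). The main obstacle is purely the duality bookkeeping: correctly locating the surjectivity counterpart of \cite[Theorem 2.91]{1} in step (iii)$\Leftrightarrow$(iv), and faithfully translating the scalar Fredholm and Browder spectra through the adjoint identities $\sigma_{lf}(T)=\sigma_{uf}(T^{*})$ and $\sigma_{lb}(T)=\sigma_{ub}(T^{*})$ in the final converse.
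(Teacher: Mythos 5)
Your proposal is correct and follows the same overall architecture as the paper: the cycle (i)$\Leftrightarrow$(ii), (iii)$\Leftrightarrow$(iv), (i)$\Leftrightarrow$(iv), built from the same ingredients (Theorem \ref{pretheorem1}, Lemma \ref{lemma1}(ii), \cite[Theorem 2.5]{33}, \cite[Theorem 2.6]{30}, and the results of \cite{27}). The one place you genuinely diverge is the implication (iii)$\Rightarrow$(iv): you argue pointwise, restricting SVEP of $T^{*}$ to the semi-regular summand of the $GKMD$ and invoking the surjectivity half of \cite[Theorem 2.91]{1} to conclude that $(\lambda I-T)_M$ is surjective --- in effect dualizing the paper's own proof of Theorem \ref{theorem5} verbatim. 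The paper instead takes a global detour: from SVEP of $T^{*}$ off $\sigma_{lf}(T)$ it first derives $\sigma_{lf}(T)=\sigma_{lb}(T)$ via \cite[Theorem 2.9]{27}, upgrades this to $\sigma_{lsbf}(T)=\sigma_{lsbb}(T)$ by Lemma \ref{lemma1}(ii), and only then applies the accumulation-point criterion of \cite[Theorem 2.6]{30}. Both routes are sound; yours is more local and more faithfully parallel to Theorem \ref{theorem5}, while the paper's has the side effect of establishing (iii)$\Rightarrow$(i) along the way. Your handling of the final converse --- passing to $T^{*}$ via $\sigma_{lf}(T)=\sigma_{uf}(T^{*})$ and $\sigma_{lb}(T)=\sigma_{ub}(T^{*})$ so as to apply \cite[Theorem 2.8]{27} --- is exactly the dualization the paper leaves implicit when it writes ``this gives $\sigma_{lsbf}(T)=\sigma_{lsbb}(T)$''.
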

\begin{proof}
(i) $\Leftrightarrow$ (ii) Suppose that $\sigma_{lsbf}(T)=\sigma_{lsbb}(T)$. Let $\lambda \notin \sigma_{lsbf}(T)$, then $\lambda \notin \sigma_{lsbb}(T)$ which gives $q(\lambda I-T)< \infty$. Therefore, $T^*$ has SVEP at $\lambda$. Now suppose that $T^*$ has SVEP at every $\lambda \notin \sigma_{lsbf}(T)$.  It suffices to prove that $\sigma_{lsbb}(T) \subset \sigma_{lsbf}(T)$.  Let $\lambda \notin \sigma_{lsbf}(T)$, then $\lambda I-T$ is lower semi B-Fredholm operator. Since $T^*$ has SVEP at $\lambda$ then  by  \cite[Theorem 2.5]{33} we have $\lambda \notin \sigma_{lsbb}(T)$.\\
(iii) $\Leftrightarrow$ (iv) Suppose that $T^*$ has SVEP at every $\lambda \notin \sigma_{gDM\phi_{-}}(T)$ which implies that $\lambda I-T$ is generalized Drazin-meromorphic lower semi-Fredholm. It suffices to show that $\sigma_{gDM\mathcal{Q}}(T) \subset \sigma_{gDM\phi_{-}}(T)$. By Theorem \ref{pretheorem1} it follows that $\lambda I-T$ admits a $GKMD$ and $\lambda \notin \mbox{acc} \sigma_{lsbf}(T)$. Since $\sigma_{gDM\phi_{-}}(T) \subset \sigma_{lf}(T)$, $T^*$ has SVEP at every $\lambda \notin \sigma_{lf}(T)$. Therefore, by \cite[Theorem 2.9]{27} we have $\sigma_{lf}=\sigma_{lb}(T)$. Thus, by Lemma \ref{lemma1} b  $\sigma_{lsbf}=\sigma_{lsbb}(T)$ which implies that $\lambda \notin \mbox{acc} \sigma_{lsbb}(T)$.  Hence, $\lambda \notin \sigma_{gDM\mathcal{Q}}(T).$ Conversely, suppose that $\sigma_{gDM\mathcal{Q}}(T)=\sigma_{gDM\phi_{-}}(T)$.  Let $\lambda \notin \sigma_{gDM\phi_{-}}(T)$, then $ \lambda I-T$ is generalized Drazin-meromorphic surjective. Therefore by  \cite[Theorem 2.6]{30} it follows that $T^{*}$ has SVEP at $\lambda$.\\
(i) $\Leftrightarrow$ (iv) Suppose that $\sigma_{lsbf}(T)=\sigma_{lsbb}(T)$.  It suffices to prove that $\sigma_{gDM\mathcal{Q}}(T) \subset \sigma_{gDM\phi_{-}}(T)$. Let $\lambda \notin \sigma_{gDM\phi_{-}}(T)$, then $\lambda I-T$ is generalized Drazin-meromorphic lower semi-Fredholm. By Theorem \ref{pretheorem1} it follows that $\lambda I-T$ admits a $GKMD$ and $\lambda \notin \mbox{acc} \sigma_{lsbf}(T)$. This gives $\lambda \notin \mbox{acc} \sigma_{lsbb}(T)$. Therefore, by  \cite[Theorem 2.6]{30} $\lambda I-T$ is generalized Drazin-meromorphic surjective which gives $\lambda \notin \sigma_{gDM\mathcal{Q}}(T)$.  Conversely, suppose that $\sigma_{gDM\mathcal{Q}}(T)=\sigma_{gDM\phi_{-}}(T)$. Then by (iv) $\Rightarrow$ (iii) $T^{*}$ has SVEP at every $\lambda \notin \sigma_{gDM\phi_{-}}(T)$. Since $\sigma_{gDM\phi_{-}}(T) \subset \sigma_{lf}(T)$ , $T^{*}$ has SVEP at every $\lambda \notin \sigma_{lf}(T)$. This gives $\sigma_{lsbf}(T)=\sigma_{lsbb}(T)$.
\end{proof}
Using \cite[Corollary 2.10]{27} and Theorems \ref{theorem5}, \ref{theorem6}  we have the following result:
\begin{corollary}
Let $T \in B(X)$, then the following statements are equivalent:

(i) $\sigma_{f}(T)=\sigma_{b}(T)$,

(ii) $T$ and $T^{*}$ have SVEP at every $\lambda \notin \sigma_{f}(T)$,

(iii) $\sigma_{bf}(T)=\sigma_{bb}(T)$,

(iv) $T$ and $T^{*}$ have SVEP at every $\lambda \notin \sigma_{bf}(T)$,

(v) $\sigma_{gD}(T)=\sigma_{pbf}(T)$,

(vi) $T$ and $T^{*}$ have SVEP at every $\lambda \notin \sigma_{pbf}(T)$,

(viii) $\sigma_{gDR}(T)=\sigma_{gDR\phi}(T)$,

(viii) $T$ and $T^{*}$ have SVEP at every $\lambda \notin \sigma_{gDR\phi}(T)$,

(ix) $\sigma_{gDM}(T)=\sigma_{gDM\phi}(T)$,

(x) $T$ and $T^{*}$ have SVEP at every $\lambda \notin \sigma_{gDM\phi}(T)$.
\end{corollary}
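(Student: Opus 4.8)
The plan is to read the list as five matched pairs: a spectral identity of the form $\sigma_{X}(T)=\sigma_{X\phi}(T)$ (at the classical, $B$-Fredholm, pseudo $B$-Fredholm, generalized Drazin--Riesz and generalized Drazin--meromorphic levels) together with the assertion that $T$ and $T^{*}$ both have SVEP off the smaller of the two spectra. The pairs (i)--(vi) are already equivalent by \cite[Corollary 2.10]{27}, and the generalized Drazin--Riesz pair follows by running the argument below with ``meromorphic'' replaced by ``Riesz'' (and \cite{30} by \cite{10}), if it is not already contained in \cite[Corollary 2.10]{27}; so the real task is to splice the generalized Drazin--meromorphic pair (ix)--(x) into this chain. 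Everything will be driven by the inclusions $\sigma_{gDM\phi}(T)\subseteq\sigma_{bf}(T)$ and $\sigma_{gDM}(T)\subseteq\sigma_{bb}(T)$, which hold because a (semi-)Fredholm operator is trivially generalized Drazin--meromorphic (semi-)Fredholm and a nilpotent summand is meromorphic, together with the union decompositions $\sigma_{gDM\phi}(T)=\sigma_{gDM\phi_{+}}(T)\cup\sigma_{gDM\phi_{-}}(T)$ and $\sigma_{gDM}(T)=\sigma_{gDM\mathcal{J}}(T)\cup\sigma_{gDM\mathcal{Q}}(T)$.

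I would first prove $(\mathrm{ix})\Leftrightarrow(\mathrm{x})$, the exact Fredholm analogue of Corollary \ref{corollary1}. For $(\mathrm{ix})\Rightarrow(\mathrm{x})$, any $\lambda\notin\sigma_{gDM\phi}(T)=\sigma_{gDM}(T)$ makes $\lambda I-T$ generalized Drazin--meromorphic invertible, so \cite[Theorem 2.4]{30} supplies SVEP of $T$ and of $T^{*}$ at $\lambda$. For $(\mathrm{x})\Rightarrow(\mathrm{ix})$, a point $\lambda\notin\sigma_{gDM\phi}(T)$ lies outside $\sigma_{gDM\phi_{+}}(T)$ and $\sigma_{gDM\phi_{-}}(T)$ at once; feeding the SVEP of $T$ (resp.\ $T^{*}$) at that single point into the pointwise argument used in the proof of Theorem \ref{theorem5} (resp.\ Theorem \ref{theorem6}) places $\lambda$ outside $\sigma_{gDM\mathcal{J}}(T)$ (resp.\ $\sigma_{gDM\mathcal{Q}}(T)$), hence outside $\sigma_{gDM}(T)$. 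To merge this pair into the chain I would close the short cycle $(\mathrm{iii})\Rightarrow(\mathrm{ix})\Leftrightarrow(\mathrm{x})\Rightarrow(\mathrm{iv})\Leftrightarrow(\mathrm{iii})$: the last equivalence is \cite[Corollary 2.10]{27}; $(\mathrm{x})\Rightarrow(\mathrm{iv})$ holds because SVEP on the larger set $\{\lambda\notin\sigma_{gDM\phi}(T)\}$ restricts to the subset $\{\lambda\notin\sigma_{bf}(T)\}$ (via $\sigma_{gDM\phi}(T)\subseteq\sigma_{bf}(T)$); and for $(\mathrm{iii})\Rightarrow(\mathrm{ix})$, at $\lambda\notin\sigma_{gDM\phi}(T)$ Theorem \ref{pretheorem1} yields a GKMD with $\lambda\notin\mbox{acc}\,\sigma_{bf}(T)=\mbox{acc}\,\sigma_{bb}(T)$, so \cite[Theorem 2.4]{30} gives $\lambda\notin\sigma_{gDM}(T)$.

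The step I expect to be the real obstacle is exactly the reason one must argue pointwise rather than simply take unions of the biconditionals in Theorems \ref{theorem5} and \ref{theorem6}: a two-sided identity such as $\sigma_{f}(T)=\sigma_{b}(T)$ does \emph{not} split as $\sigma_{uf}(T)=\sigma_{ub}(T)$ together with $\sigma_{lf}(T)=\sigma_{lb}(T)$. For instance, if $L$ is the backward shift on $\ell^{2}$ and $V$ is an isometry whose range has infinite codimension, then $\sigma_{f}(L\oplus V)=\sigma_{b}(L\oplus V)=\overline{\mathbb{D}}$ while $\sigma_{uf}(L\oplus V)\neq\sigma_{ub}(L\oplus V)$, and correspondingly $\sigma_{gDM\phi_{+}}(L\oplus V)\neq\sigma_{gDM\mathcal{J}}(L\oplus V)$ although (ix) still holds. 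Thus the full strength of Theorems \ref{theorem5}, \ref{theorem6} cannot be invoked; only their pointwise content is available, applied at each $\lambda$ lying outside the two-sided spectrum, where $\lambda I-T$ fortunately carries both one-sided structures and where SVEP is assumed for $T$ and for $T^{*}$ simultaneously. Getting the accumulation-point bookkeeping right---that generalized Drazin--meromorphic (semi-)Fredholmness amounts to a GKMD together with $\lambda\notin\mbox{acc}$ of the matching semi-$B$-Browder spectrum (Theorem \ref{pretheorem1})---is the delicate part; once it is in place, the inclusions among the five ``small'' spectra make the cycle close.
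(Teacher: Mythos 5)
Your argument is sound, and it is in fact more of a proof than the paper supplies: the corollary is stated there with no proof at all beyond the citation line ``Using \cite[Corollary 2.10]{27} and Theorems \ref{theorem5}, \ref{theorem6}.'' Your central observation --- that one cannot simply take the conjunction of Theorems \ref{theorem5} and \ref{theorem6}, because the two-sided identity $\sigma_{f}(T)=\sigma_{b}(T)$ does not split into $\sigma_{uf}(T)=\sigma_{ub}(T)$ together with $\sigma_{lf}(T)=\sigma_{lb}(T)$ (your $L\oplus V$ example is correct: there $\sigma_{f}=\sigma_{b}=\overline{\mathbb{D}}$ while the open disc lies in $\sigma_{ub}\setminus\sigma_{uf}$) --- is precisely the point that the paper's one-line derivation glosses over, and your remedy of closing the directed cycle $(\mathrm{iii})\Rightarrow(\mathrm{ix})\Rightarrow(\mathrm{x})\Rightarrow(\mathrm{iv})\Rightarrow(\mathrm{iii})$, driven by the inclusion $\sigma_{gDM\phi}(T)\subseteq\sigma_{bf}(T)$, the union decompositions, and the characterization in Theorem \ref{pretheorem1}, is the right way to splice the pair (ix)--(x) onto the chain already established in \cite[Corollary 2.10]{27}. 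One caveat on your free-standing proof of $(\mathrm{x})\Rightarrow(\mathrm{ix})$: you propose to feed SVEP of $T^{*}$ at a single point into ``the pointwise argument used in the proof of Theorem \ref{theorem6}'', but the surjectivity half of Theorem \ref{theorem6} is not argued pointwise in the paper --- it passes through the global identity $\sigma_{lf}(T)=\sigma_{lb}(T)$ --- and, unlike the bounded-below case, SVEP of $T^{*}$ does not restrict to the $T$-invariant summand $M$; to argue pointwise you would need the dual companion of \cite[Theorem 2.91]{1} (SVEP of $T^{*}$ at $\lambda$ together with semi-regularity of $(\lambda I-T)_{M}$ forces $(\lambda I-T)_{M}$ to be surjective). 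Since your cycle already delivers $(\mathrm{x})\Rightarrow(\mathrm{ix})$ via $(\mathrm{x})\Rightarrow(\mathrm{iv})\Rightarrow(\mathrm{iii})\Rightarrow(\mathrm{ix})$, the cleanest fix is simply to drop the direct argument for that one implication; with that adjustment the proof is complete and, in my view, more honest than the source.
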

 \section{Cline's Formula for the generalized Drazin-meromorphic invertibility}
 Let $R$ be a ring with identity. Drazin\cite{13} introduced the concept of Drazin inverses in a ring. 
 An element $a \in R$ is said to be \emph{Drazin invertible} if there exist an element $b \in R$ and $r \in \mathbb{N}$ such that 
 $$ a b=b a,\thinspace bab=b,\thinspace a^{r+1} b=a^r.$$
 If such $b$ exists then it is unique and is called \emph{Drazin inverse} of $a$ and denoted by $a^D$. For $a ,b \in R$, Cline \cite{12}  proved  that if $a b$ is Drazin invertible, then $b a$ is Drazin invertible and $(b a)^D=b((a b)^D)^2 a$. Recently, Gupta and Kumar \cite{3} generalized Cline's formula for Drazin inverses in a ring with identity to the case when $a^kb^ka^k=a^{k+1}$ for some $k \in \mathbb{N}$ and obtained the following result:
 \begin{theorem} \emph{(\cite[Theorem 2.20]{3})} 
Let $R$ be a ring with identity and suppose that $a^k b^k a^k=a^{k+1} $ for some $k \in \mathbb{N}$. Then
$a$ is Drazin invertible if and only if $b^k a^k$ is Drazin invertible. Moreover, $(b^k a^k)^D = b^k (a^D)^2 a^k$ and $a^D=a^k (b^k a^k)^D)^{k+1}$.
\end{theorem}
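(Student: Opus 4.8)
The plan is to set $Q := b^k a^k$ and first extract, purely from the hypothesis $a^k b^k a^k = a^{k+1}$, a handful of \emph{transport relations} linking powers of $a$ to powers of $Q$. Reading the hypothesis as $a^k(b^k a^k) = a^{k+1}$ and multiplying on the left by powers of $a$ gives $a^m Q = a^{m+1}$ for every $m \ge k$, and iterating yields $a^k Q^n = a^{k+n}$ for all $n \ge 0$. Dually, absorbing the hypothesis into the definition of $Q$ gives $Q^m = b^k a^{k+m-1}$ for all $m \ge 1$; in particular $Qa = b^k a^{k+1} = Q^2$, whence $Q^n a = Q^{n+1}$ for all $n \ge 1$. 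These identities, together with the elementary Drazin facts $a a^D = a^D a$, $a(a^D)^2 = (a^D)^2 a = a^D$, and $a^m(a^D)^m = a a^D$ for all $m \ge 1$, are the only ingredients I expect to need.

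For the forward implication I would take the candidate $W := b^k (a^D)^2 a^k$ and verify the three defining equations for $W = Q^D$ directly. The recurring move is to commute $a^D$ past $a$ so that an $a^k$ reappears next to a $b^k$, and then collapse $a^k b^k a^k$ to $a^{k+1}$. For instance $(a^D)^2 a^k = a^k (a^D)^2$ turns both $QW$ and $WQ$ into $b^k a^{k+1}(a^D)^2 = b^k a^D a^k$, giving commutativity at once, and the same device gives $WQW = W$. For the index equation I would compute $Q^{s+1}W$ via $Q^{s+1} = b^k a^{k+s}$ and reduce it to $b^k a^{k+s-1}(a a^D)$, which equals $Q^s = b^k a^{k+s-1}$ as soon as $s$ is large enough that $a^{k+s}a^D = a^{k+s-1}$. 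This direction is essentially mechanical once the transport relations are in place.

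The converse is where the real work lies. Assuming $Q$ is Drazin invertible I would propose $V := a^k (Q^D)^{k+1}$ as the Drazin inverse of $a$, but to manipulate it I first need relations moving $a$ past $Q^D$. Writing $f := Q Q^D = Q^D Q$ and using $f Q^D = Q^D$, multiplying $Qa = Q^2$ on the left by powers of $Q^D$ produces $(Q^D)^m a = (Q^D)^{m-1}$ for $m \ge 2$; symmetrically, $a^k Q^n = a^{k+n}$ gives $a^{k+n}(Q^D)^n = a^k f$. With these, $aV = a^{k+1}(Q^D)^{k+1}$ and $Va = a^k (Q^D)^{k+1}a$ both collapse to $a^k (Q^D)^k$, so $a$ and $V$ commute, and $VaV = V$ follows by peeling off the factors in $(Q^D)^{k+1}a^k = Q^D$ (using only the $m \ge 2$ relation, so the exceptional identity $Q^D a = f$ is never invoked).

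The subtle point, and the step I would flag as the main obstacle, is the index equation $a^{r+1}V = a^r$. Here the spectral idempotent $f = Q Q^D$ intrudes: the computation naturally lands on $a^{r+1}V = a^r f$, and one must argue $a^r f = a^r$ for all large $r$. I would obtain this from the nilpotency of $Q(1-f)$, which gives $Q^n = Q^n f$ for $n \ge \mathrm{ind}(Q)$; combined with $a^{k+n} = a^k Q^n$ this forces $a^m = a^m f$ once $m$ is large, and choosing $r$ beyond that threshold closes the argument and identifies $a^D = a^k\big((b^k a^k)^D\big)^{k+1}$. The only genuine hazard throughout is bookkeeping: low-power identities such as $Q^D a = f$ and $a a^D \ne 1$ are \emph{not} the naive ones, so every cancellation must respect the minimal exponent at which it is valid.
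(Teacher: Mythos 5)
Your proposal is correct: the transport relations $a^kQ^n=a^{k+n}$, $Q^m=b^ka^{k+m-1}$, $Q^na=Q^{n+1}$ all follow from $a^kb^ka^k=a^{k+1}$ as you claim, the candidates $W=b^k(a^D)^2a^k$ and $V=a^k(Q^D)^{k+1}$ do satisfy the three defining equations, and you correctly isolate the one delicate point, namely killing the idempotent $f=QQ^D$ in $a^{r+1}V=a^rf$ via nilpotency of $Q(1-f)$. The paper itself does not prove this statement (it is quoted from \cite{3}), but your argument uses exactly the candidate inverses that the paper employs for its operator-theoretic analogue in Section 3 ($S=B^kT^2A^k$ and $S'=A^k{T'}^{k+1}$), so your route is essentially the intended one.
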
  

Recently, Karmouni and Tajmouati \cite{7}  investigated for bounded linear operators $A,B,C$ satisfying the operator equation $ABA=ACA$ and  obtained that $AC$ is generalized Drazin-Riesz invertible if and only if $BA$ is generalized Drazin-Riesz invertible. Also, they generalized Cline's formula to the case of generalized Drazin-Riesz invertibility. In this section, we establish Cline's formula for the generalized  Drazin- Riesz invertibility for bounded linear operators $A$ and $B$ under the condition $A^k B^k A^k=A^{k+1}$.  By \cite[Theorem 2.1, Theorem 2.2, Proposition 2.4 and Lemma 2.1]{3} and a result \cite[Corollary 3.99]{1} we can deduce the following result:
 \begin{proposition}\label{proposition3}
 Let $A,B \in B(X)$ satisfies  $A^k B^k A^k=A^{k+1}$ for some $k \in \mathbb{N}$, then $A$ is meromorphic if and only if $B^kA^k$ is meromorphic.
 \end{proposition}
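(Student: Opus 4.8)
The plan is to recast ``meromorphic'' as a condition on the Drazin spectrum and then transport that condition across the equation $A^k B^k A^k = A^{k+1}$. Set $C := B^k A^k$ and write $\sigma_D(T) := \{\lambda \in \mathbb{C} : \lambda I - T \text{ is not Drazin invertible}\}$. By the very definition of a meromorphic operator (see also \cite[Corollary 3.99]{1}), $T$ is meromorphic if and only if $\sigma_D(T) \subseteq \{0\}$. Hence the proposition will follow once I establish the Cline-type spectral identity
\[
\sigma_D(A) \setminus \{0\} = \sigma_D(C) \setminus \{0\},
\]
because then $\sigma_D(A)\setminus\{0\} = \emptyset$ if and only if $\sigma_D(C)\setminus\{0\} = \emptyset$, i.e. $A$ is meromorphic if and only if $C$ is.

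First I would record the two intertwining relations forced by the hypothesis. From $A^k B^k A^k = A^{k+1}$ we get $A^k C = A^k B^k A^k = A^{k+1} = A\, A^k$, whence $A^k(\lambda I - C) = (\lambda I - A) A^k$ for every $\lambda \in \mathbb{C}$; and $C^2 = B^k(A^k B^k A^k) = B^k A^{k+1} = CA$, whence $C(\lambda I - A) = (\lambda I - C) C$. These relations, which are the algebraic heart of \cite[Lemma 2.1]{3}, link $\lambda I - A$ and $\lambda I - C$ through the intertwiners $A^k$ and $C$. Next I would fix $\lambda \neq 0$ and use the kernel, range, ascent and descent comparisons of \cite[Theorem 2.1, Theorem 2.2, Proposition 2.4]{3} to push Drazin invertibility across these intertwinings. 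Since $\lambda I - T$ is Drazin invertible exactly when $p(\lambda I - T) = q(\lambda I - T) < \infty$, and since the cited results transfer finiteness of ascent, finiteness of descent, and the required range-closedness between $\lambda I - A$ and $\lambda I - C$ for nonzero $\lambda$, I would conclude that $\lambda I - A$ is Drazin invertible if and only if $\lambda I - C$ is. This is precisely the displayed identity, and combined with the reduction above it proves the equivalence.

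The step I expect to be the main obstacle is the passage to all nonzero $\lambda$ at once. Cline's formula in its basic form, \cite[Theorem 2.20]{3}, compares only the Drazin invertibility of $A$ and of $C = B^k A^k$ at the point $0$; here I need the comparison for every $\lambda \neq 0$, and $\lambda I - B^k A^k$ is not itself of the product form $b^k a^k$, so it cannot be fed directly into that formula. The comparison must instead be routed through the two intertwinings above together with the ascent--descent bookkeeping of \cite{3}. The saving feature is that the definition of meromorphy already discards $\lambda = 0$, which is exactly the point at which $A$ and $B^k A^k$ may genuinely differ; off the origin their spectral pictures match, so no separate treatment of the origin is required.
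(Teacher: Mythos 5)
Your proposal is correct and follows essentially the same route as the paper, which simply deduces the proposition from \cite[Theorem 2.1, Theorem 2.2, Proposition 2.4, Lemma 2.1]{3} together with \cite[Corollary 3.99]{1} without further detail. You have merely made explicit the intended mechanism: the reduction of meromorphy to $\sigma_D(T)\subseteq\{0\}$, the intertwinings $A^k(\lambda I-C)=(\lambda I-A)A^k$ and $C(\lambda I-A)=(\lambda I-C)C$, and the transfer of finite ascent and descent between $\lambda I-A$ and $\lambda I-B^kA^k$ for $\lambda\neq 0$.
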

 \begin{theorem}
 Suppose that $A,B \in B(X)$ and  $A^k B^k A^k=A^{k+1}$ for some $k \in \mathbb{N}$. Then $A$ is generalized Drazin-meromorphic invertible if and only if $B^k A^k$ is generalized Drazin-meromorphic invertible.
 \end{theorem}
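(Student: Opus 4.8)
The plan is to characterize generalized Drazin-meromorphic invertibility by two conditions that can be transferred separately from $A$ to $C:=B^kA^k$ by means of the relation $A^kB^kA^k=A^{k+1}$. Recall, as already used in the proof of Theorem \ref{theorem4} via \cite[Theorem 2.4]{30}, that $T\in B(X)$ is generalized Drazin-meromorphic invertible if and only if $T$ admits a $GKMD$ and $0\notin\mathrm{acc}\,\sigma_{bb}(T)$ (here $\sigma_{bb}$ is the Drazin spectrum, since B-Browder invertibility coincides with Drazin invertibility). Hence it suffices to prove two equivalences: first, that $A$ admits a $GKMD$ if and only if $C$ does; and second, that $0\notin\mathrm{acc}\,\sigma_{bb}(A)$ if and only if $0\notin\mathrm{acc}\,\sigma_{bb}(C)$.

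For the accumulation-point condition I would first record the off-zero Cline-type equality $\sigma_{bb}(A)\setminus\{0\}=\sigma_{bb}(C)\setminus\{0\}$, that is, for every $\lambda\neq 0$ the operator $\lambda I-A$ is Drazin invertible exactly when $\lambda I-C$ is; this is of the type established in \cite{3} that also underlies Proposition \ref{proposition3}. Since whether $0$ is an accumulation point of $\sigma_{bb}$ depends only on points distinct from $0$, the equality off $0$ forces $0\notin\mathrm{acc}\,\sigma_{bb}(A)\Leftrightarrow 0\notin\mathrm{acc}\,\sigma_{bb}(C)$. The meromorphic summand of a $GKMD$ is handled directly by Proposition \ref{proposition3}, since in a decomposition $X=M\oplus N$ the restriction to $N$ is meromorphic and this property passes between $A$ and $C$; the remaining task is then to transfer the semi-regular summand, equivalently to show $0\notin\sigma_{gKM}(A)\Leftrightarrow 0\notin\sigma_{gKM}(C)$.

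The transfer of the generalized Kato-meromorphic structure is where I expect the real difficulty: the hypothesis couples $A$ and $C$ only through products of high powers, so semi-regularity (closed range together with the inclusion of the kernel in every power of the range) is not visibly preserved, and a direct-sum decomposition reducing $A$ need not be respected by $B$. To avoid this, a safer route that parallels the Drazin formula $(B^kA^k)^{D}=B^k(A^{D})^{2}A^{k}$ of \cite[Theorem 2.20]{3} is to argue at the level of the defining inverse. Starting from a generalized Drazin-meromorphic inverse $S$ of $A$, with $AS=SA$, $SAS=S$ and $ASA-A$ meromorphic, I would set $S':=B^kS^2A^k$, verify the two purely algebraic identities $CS'=S'C$ and $S'CS'=S'$ by the same manipulations as in the Drazin case, and finally invoke Proposition \ref{proposition3} to conclude that the error term $CS'C-C$ is meromorphic; this exhibits $S'$ as a generalized Drazin-meromorphic inverse of $C$, and the converse direction is symmetric.
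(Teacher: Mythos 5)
Your final route --- constructing the candidate inverse $S'=B^kS^2A^k$ for $C=B^kA^k$ and checking the three defining conditions, with Proposition \ref{proposition3} supplying the meromorphicity of the error term --- is exactly the paper's argument for the forward implication, and you were right to abandon the spectral route over the difficulty of transferring the $GKMD$. (The paper makes the last step of the forward direction precise by setting $Q=I-AT$, noting that $QA=-(ATA-A)$ is meromorphic, computing $B^kA^k-(B^kA^k)^2S=B^k(QA)^k$ together with $(QA)^kB^k(QA)^k=(QA)^{k+1}$, and then applying Proposition \ref{proposition3} to the new pair $(QA,B)$; your phrase ``the same manipulations as in the Drazin case'' is carrying that weight, but the outline is sound.)

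The genuine gap is the final sentence, ``the converse direction is symmetric.'' It is not. The hypothesis $A^kB^kA^k=A^{k+1}$ is asymmetric in $A$ and $C=B^kA^k$: no relation of the form $C^jD^jC^j=C^{j+1}$ is available that would let you rerun the forward argument with the roles exchanged, so the inverse of $A$ cannot be obtained as a mirror image such as $A^k(T')^2B^k$. Consistently with the formula $a^D=a^k((b^ka^k)^D)^{k+1}$ from \cite[Theorem 2.20]{3}, the paper takes $S'=A^k(T')^{k+1}$, where $T'$ is the generalized Drazin-meromorphic inverse of $B^kA^k$, and the verification is genuinely different and longer: one checks $AS'=S'A$ by repeated substitution of $A^{k+1}=A^kB^kA^k$, proves by induction that $(A-A^2S')^n=A^n-A^{n+1}S'$, and establishes $(A-A^2S')^kB^k(A-A^2S')^k=(A-A^2S')^{k+1}$, which is what licenses a second application of Proposition \ref{proposition3} to deduce that $A-A^2S'$ is meromorphic from the meromorphicity of $B^kA^k-(B^kA^k)^2T'$. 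Without producing this (or some other) explicit inverse and verification for the reverse direction, your proposal establishes only one of the two implications.
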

 \begin{proof}
 Suppose that $A$ is generalized Drazin-meromorphic invertible, then there exists $T \in B(X)$ such that 
 $$TA=AT, \quad TAT=T \quad \mbox{and} \quad ATA-A \thinspace \thinspace  \mbox{is meromorphic}.$$
  Let $S=B^k T^2 A^k.$ Then  $$(B^k A^k)S=(B^k A^k)(B^k T^2 A^k)=B^k(A^k B^k A^k)T^2=B^k A^{k+1} T^2=B^k A^k T$$ and $$S(B^k A^k)=(B^k T^2 A^k) (B^k A^k)=B^k T^2 A^{k+1}=B^k A^k T.$$ Therefore, $S(B^k A^k)=(B^k A^k)S$. Consider 
  \begin{align*}
  S(B^k A^k) S&=B^k T^2 A^k(B^k A^k)B^k T^2 A^k=(B^k T^2 A^k)(B^k  A^k T)=B^k T^2 A^{k+1}T=B^k T^2 A^k=S.
  \end{align*}
   Let $Q=I-AT$, then $Q$ is a bounded projection commuting with $A$.  Therefore, $Q^n=Q$ for all $n \in \mathbb{N}$. We observe that 
  $$(QA)^k B^k (QA)^k=Q^k A^k B^k Q^k A^k=Q^k A^{k+1} Q^k=Q^{k+1} A^{k+1}=(QA)^{k+1}$$ and
\begin{align*}
B^k A^k-(B^k A^k)^2 S&=B^k A^k-(B^k A^k)^2 B^k T^2 A^K=B^k A^k-B^k (A^k B^k A^k) B^k T^2 A^k\\
&=B^k A^k-B^k A^{k+2} T^2=B^k (I-A^2 T^2) A^k=B^k (I-A T) A^k\\
&=B^k Q A^k=B^k Q^k A^k=B^k(QA)^k.
\end{align*} Since $QA$ is meromorphic and$(QA)^k B^k (QA)^k=(QA)^{k+1}$, by Proposition \ref{proposition3} $B^k A^k-(B^k A^k)^2 S$ is meromorphic.

Conversely, suppose that $B^k A^k$ is generalized Drazin-meromorphic invertible. Then there exists $T' \in B(X)$ such that
$$T'B^k A^k=B^k A^kT', \quad T'B^k A^kT'=T' \quad \mbox{and} \quad B^k A^kT'B^k A^k-B^k A^k \thinspace \thinspace  \mbox{is meromorphic}.$$ Let $S'=A^k {T'}^{k+1}.$ Then $$S'A=A^k {T'}^{k+1} A=A^k {T'}^{k+2} B^k A^k A=A^k {T'}^{k+2} B^k A^{k+1}=A^k {T'}^{k+2} (B^k A^k)^2=A^k {T'}^k $$ and $$AS'=A^{k+1} {T'}^{k+1}=A^k {T'}^k.$$
Consider
\begin{align*}
AS'&=(A^k {T'}^{k+1} A) A^k {T'}^{k+1}=(A^k {T'}^k)A^k {T'}^{k+1}=A^k v^{k+1}B^k A^{2 k} {T'}^{k+1}=A^k {T'}^{k+1}(B^k A^k)^{k+1}\\
&= S^{k+1}=A^k {T'}^{k+1}=S'.
\end{align*}
We claim that for all $n \in \mathbb{N}$ we have  $$(A-A^2 S')^n= (A^n-A^{n+1} S').$$
We prove it by induction. Evidently, the result is true for $n=1$. Assume it to be true for $n=p$. Consider 
\begin{align*}
(A-A^2 S')^{p+1}&=(A-A^2 S')(A-A^2 S')^p\\
&=(A-A^2 S')(A^p-A^{p+1} S')\\
&=A^{P+1}-A^{P+2}S'-A^{P+2}S'+A^{P+3} {S'}^2\\
&=A^{p+1}-A^{p+2} S'.
\end{align*} 
Also, 
\begin{align*}
B^k(A-A^2 S')^k&=B^k(A^k- A^{k+1} S')=B^k A^k-B^k A^{k-1}A^2 S'=B^k A^k-B^k A^{k-1}A^k {T'}^{k-1}\\&=B^k A^k-B^k A^{2k-1}{T'}^{k-1}=B^k A^k-(B^k A^{k})^k {T'}^{k-1}=B^k A^k-(B^k A^{k})^2 S'.
\end{align*}
Now consider 
\begin{align*}
(A-A^2 S')^k B^k (A-A^2 S')^k&=(A^k-A^{k+1} S') B^k (A^k-A^{k+1} S')\\
&=A^k B^k A^k-A^{k+1} S' B^k A^k-A^k B^k A^k  B^k A^k S'+A^{k+1} (B^k A^k)^2 {S'}^2\\
&=A^{k+1}-A^{k+2}S'=(A-A^2 S')^{k+1}.
\end{align*}  Since $B^k(A-A^2 S')^k=B^k A^k-(B^k A^k)^2 T'$ is meromorphic, by Proposition \ref{proposition3}  it follows that $A-A^2 S'$ is meromorphic.
 \end{proof}
 \section*{Acknowledgement}
 The second author is supported by Department of Science and Technology, New Delhi, India (Grant No. DST/INSPIRE Fellowship/[IF170390]).

 \end{document}